\numberwithin{equation}{section}
\def\Z{\mathbb Z}
\def\R{\mathbb R}
\def\a{\alpha}
\def\b{\beta}
\def\s{\sigma}
\def\l{\lambda}
\def\g{\gamma}
\def\CF{\mathcal{F}}
\def\CE{\mathcal{E}}
\def\cM{M}
\def\la{\langle}
\def\ra{\rangle}
\newtheorem{teor}{Theorem}[section]
\newtheorem{theorem}[teor]{Theorem}
\newtheorem{corollary}[teor]{Corollary}
\newtheorem{proposition}[teor]{Proposition}
\newtheorem{lemma}[teor]{Lemma}
\title[Highly oscillatory unimodular Fourier multipliers] {Highly oscillatory unimodular Fourier multipliers on modulation spaces}
\author{Fabio Nicola, Eva Primo, Anita Tabacco}
\address{Dipartimento di Matematica,
Politecnico di Torino, corso Duca degli
Abruzzi 24, 10129 Torino, Italy}
\address{Departament d'An\`alisi Matem\`atica, Universitat de Val\`encia, Dr. Moliner 50, 46100-Burjassot, Val\`encia (Spain)}
\address{Dipartimento di Matematica,
Politecnico di Torino, corso Duca degli
Abruzzi 24, 10129 Torino, Italy}
\email{fabio.nicola@polito.it}
\email{eva.primo@uv.es}
\email{anita.tabacco@polito.it}
\thanks{}
\subjclass[2010]{42B15 (35Q55 42B35)}
\keywords{Fourier multipliers, modulation spaces, Wiener amalgam spaces}
\begin{document}
\begin{abstract} We study the continuity on the modulation spaces $M^{p,q}$ of Fourier multipliers with symbols of the type $e^{i\mu(\xi)}$, for some real-valued function $\mu(\xi)$. A number of results are known, assuming that the derivatives of order $\geq 2$ of the phase $\mu(\xi)$ are bounded or, more generally, that its second derivatives belong to the Sj\"ostrand class $M^{\infty,1}$. Here we extend those results, by assuming that the second derivatives lie in the bigger Wiener amalgam space $W(\CF L^1,L^\infty)$; in particular they could have stronger oscillations at infinity such as $\cos |\xi|^2$. Actually our main result deals with the more general case of possibly unbounded second derivatives. In that case we have boundedness on weighted modulation spaces with a sharp loss of derivatives.
\end{abstract}

\maketitle
\section{Introduction}
Fourier multipliers represent one of the main research field in Harmonic Analysis, where a number of challenging problems remain open \cite{stein93}. The connections with other branches of pure and applied mathematics are uncountable (combinatorics, PDEs, signal processing, functional calculus, etc.). In this paper we consider Fourier multipliers in $\R^d$ of the form 
\begin{equation}\label{moltip}
e^{i\mu(D)}f(x):=\int_{\mathbb{R}^d}
e^{2\pi
ix\xi}e^{i\mu(\xi)}\hat{f}(\xi)\, d\xi
\end{equation}
for some real-valued phase $\mu$. \par
The prototype is given by the phase $\mu(\xi)=|\xi|^2$. In that case the operator $e^{i\mu(D)}$ is the propagator for the free Schr\"odinger equation, and similarly for other constant coefficient equations. Hence it is of great interest to study the continuity of such operators on several functions spaces arising in PDEs. Whereas such operators represent unitary transformations of $L^2(\R^d)$, their continuity on $L^p(\R^d)$ for $p\not=2$ in general fails. Hence recently a number of works addressed the problem of the continuity in other function spaces. Among those, the more convenient spaces, at least in the case of the Schr\"odinger model, turned out to be the modulation spaces $M^{p,q}(\R^d)$, $1\leq p,q\leq\infty$, widely used in Time-frequency Analysis  \cite{F1,grochenig}. The basic reason is that the Schr\"odinger  propagator is sparse with respect to Gabor frames \cite{CNR2009}, which in turn give a discrete characterization of the modulation space norms. Now, for $p=q=2$ we have $M^{2,2}(\R^d)=L^2(\R^d)$ and, in general, the modulation space norm gives a measure of the size of a function or temperate distribution {\it in phase space} or {\it time-frequency plane}, exactly as the Lebesgue space norms $L^p$ provide a measure of the size of a function in the physical space. The couple of indices $p,q$ allows one to measure the decay both in the physical and frequency domain, separately. We refer to the next section for the definition and basic properties of modulation spaces. \par
Now, it was proved in \cite{benyi} that the Schr\"odinger propagator (hence $\mu(\xi)=|\xi|^2$ in \eqref{moltip}) is bounded $M^{p,q}(\R^d)\to M^{p,q}(\R^d)$, for every $1\leq p,q\leq\infty$. This result motivated the study of the continuity of more general unimodular Fourier multipliers on modulation spaces. The recent bibliography in this connection is quite large; see e.g.\ \cite{benyi,benyi3,bib9,bib8,Concetti_2009_Schatten,CN2009,bib5,bib7,bib0,bib6,MNRTT,bib3,bib1,bib2,bib4}.
In short, it turns out that, for unbounded (smooth enough) phases, the properties which play a key role are:
\begin{center}
 {\it Growth and  oscillations of the second derivatives $\partial^\gamma \mu$, $|\gamma|=2$.}
 \end{center}
To put our results in context, let us just recall three basic facts. \par

\begin{enumerate}
\item[(a)] {\it No growth, mild oscillations} \cite[Theorem 11]{benyi}. 
{\it Suppose that
\[
|\partial^\gamma \mu(\xi)|\leq C\quad {\it for}\ \xi\in\R^d,\ 2\leq |\gamma|\leq 2(\lfloor d/2 \rfloor+1).
\] 
Then $e^{i\mu(D)}: M^{p,q}(\R^d)\to M^{p,q}(\R^d)$ is bounded for every $1\leq p,q\leq \infty$.}
\end{enumerate}
This result generalizes the case of the Schr\"odinger propagator, where the second derivatives of $\mu$ are in fact constants. 
\begin{enumerate}
\item[(b)] {\it No growth, mild oscillations} \cite[Lemma 2.2]{Concetti_2009_Schatten}. {\it Suppose that
\[
\partial^\gamma \mu\in M^{\infty,1}(\R^d) \quad{\it for}\ |\gamma|=2.
\]
Then $e^{i\mu(D)}: M^{p,q}(\R^d)\to M^{p,q}(\R^d)$ is bounded for every $1\leq p,q\leq \infty$.}
\end{enumerate}
Actually, \cite[Lemma 2.2]{Concetti_2009_Schatten} provides a partial but key result in this connection, from which it is easy to deduce that the symbol $\sigma(\xi)=e^{i\mu(\xi)}$ is then in the Wiener amalgam space $W(\CF L^1,L^\infty)$ (see below for the definition), which is sufficient to conclude (see also \cite{boulk,concetti-toft,TCG}). The result in (b) is also a particular case of \cite[Theorem 2.3]{CNR2015} where Schr\"odinger equations with rough Hamiltonians were considered.\par
Observe that the result in (b) improves that in (a), because of the embedding $C^{d+1}(\R^d)\hookrightarrow M^{\infty,1}(\R^d)$ (\cite[Theorem 14.5.3]{grochenig}). We also notice that $M^{\infty,1}(\R^d)\subset L^\infty(\R^d)$, so that here the second derivatives of $\mu$ do not grow at infinity, but they could oscillate, say, as $\cos |\xi|^\alpha$, with $0<\alpha\leq 1$ (cf.\ \cite[Corollary 15]{benyi}). 
\begin{enumerate}
\item[(c)] {\it Growth at infinity, mild oscillations} \cite[Theorem 1.1]{MNRTT}.
{\it Let $\alpha\geq 2$, and suppose that
\[
|\partial^\gamma \mu(\xi)|\leq C\langle \xi\rangle^{\alpha-2} \quad{\it for}\ 2\leq |\gamma|\leq \lfloor d/2\rfloor +3.
\]
Then $e^{i\mu(D)}: M^{p,q}_\delta(\R^d)\to M^{p,q}(\R^d)$ is bounded for every $1\leq p,q\leq \infty$ and $\delta\geq d(\alpha-2)|1/p-1/2|$. }
\end{enumerate}
Here $M^{p,q}_\delta$ is a modulation space weighted in frequency, so that we have in fact a loss of derivatives, which is proved to be sharp. \par
Now, it was proved in \cite[Lemma 8]{benyi} that, more generally, the operator $e^{i\mu(D)}$ is bounded on all $M^{p,q}(\R^d)$ for every $1\leq p,q\leq \infty$ if its symbol $e^{i\mu(\xi)}$ belongs to the Wiener amalgam space $W(\CF L^1,L^\infty)(\R^d)$ \cite{F2}, whose norm is defined as 
\[
\|f\|_{W(\CF L^1,L^\infty)}=\sup_{x\in\R^d}\|g(\cdot-x)f\|_{\CF L^1}
\]
where $g\in \mathcal{S}(\R^d)\setminus\{0\}$ is an arbitrary window.  This suggests to look at conditions on $\mu(\xi)$ in terms of this space, rather than modulation spaces. Here is our first result in this direction.

\begin{theorem} (No growth, strong oscillations).\label{lem:A_bound}
Let
$\mu\in C^2(\R^d)$,
real-valued, satisfying 
$$\partial^{\g}\mu(\xi) \in W(\CF L^1, L^{\infty})(\R^d)\quad {\it for}\ |\g|=2.$$ Then
$$ e^{i \mu (D)}: M^{p,q}(\R^d) \to M^{p,q}(\R^d)$$ is bounded for every $1 \leq p, q \leq \infty$.
\end{theorem}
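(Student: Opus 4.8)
The plan is to reduce the statement to the sufficient condition $e^{i\mu}\in W(\CF L^1,L^\infty)(\R^d)$, which by \cite[Lemma 8]{benyi} already guarantees that $e^{i\mu(D)}$ is bounded on every $M^{p,q}(\R^d)$, $1\le p,q\le\infty$. Thus the whole task is to show that the hypothesis $\partial^\g\mu\in W(\CF L^1,L^\infty)$ for $|\g|=2$ forces
\[
\sup_{x\in\R^d}\|g(\cdot-x)\,e^{i\mu}\|_{\CF L^1}<\infty
\]
for some (hence every) window; I would take $g\in C_c^\infty(\R^d)\subset\CS(\R^d)$ supported in the unit ball. Throughout I will use repeatedly that $\CF L^1$ is a Banach algebra under pointwise multiplication and that its norm is invariant under translations, modulations, and positive dilations (all immediate from $\|f\|_{\CF L^1}=\|\widehat f\|_{L^1}$).

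The main device is the second-order Taylor expansion at the centre $x$ of the localizing patch,
\[
\mu(\xi)=\mu(x)+\nabla\mu(x)\cdot(\xi-x)+r_x(\xi),\qquad r_x(\xi)=\int_0^1(1-t)\,(\xi-x)^{T}\,\mathrm{Hess}\,\mu\big(x+t(\xi-x)\big)(\xi-x)\,dt.
\]
On the support of $g(\cdot-x)$ the affine term $e^{i(\mu(x)+\nabla\mu(x)\cdot(\xi-x))}$ is a unimodular constant times the modulation $e^{2\pi i(\nabla\mu(x)/2\pi)\cdot\xi}$, so $\|g(\cdot-x)e^{i\mu}\|_{\CF L^1}=\|g(\cdot-x)e^{ir_x}\|_{\CF L^1}$. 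Fixing $\chi\in C_c^\infty$ with $\chi\equiv1$ on $\mathrm{supp}\,g$ and writing $e^{ir_x}=1+(e^{ir_x}-1)$, the algebra property reduces matters to a uniform bound on $\|\chi(\cdot-x)(e^{ir_x}-1)\|_{\CF L^1}$. Inserting a further cut-off $\widetilde\chi\equiv1$ on $\mathrm{supp}\,\chi$, expanding $e^{ir_x}-1=\sum_{n\ge1}(ir_x)^n/n!$ and using the algebra property once more, everything comes down to the single estimate
\[
B:=\sup_{x\in\R^d}\|\widetilde\chi(\cdot-x)\,r_x\|_{\CF L^1}<\infty,
\]
after which $\|\chi(\cdot-x)(e^{ir_x}-1)\|_{\CF L^1}\le\|\chi\|_{\CF L^1}\,(e^{B}-1)$ uniformly in $x$.

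To bound $B$ I would plug the integral form of $r_x$ into the norm and apply Minkowski's integral inequality, which leaves us to estimate, uniformly in $t\in(0,1]$, $x\in\R^d$ and $1\le i,j\le d$, the quantity $\|\phi_{ij}\,(\partial_i\partial_j\mu)(x+t\,\cdot)\|_{\CF L^1}$ with $\phi_{ij}(\eta)=\widetilde\chi(\eta)\eta_i\eta_j\in C_c^\infty$ fixed. This is the crux: writing $h:=\partial_i\partial_j\mu\in W(\CF L^1,L^\infty)$ and choosing a smooth partition of unity $1=\sum_{k\in\Z^d}\rho(\cdot-k)$, one has $\|h\,\rho(\cdot-k)\|_{\CF L^1}\lesssim\|h\|_{W(\CF L^1,L^\infty)}$ for every $k$ (essentially the definition of the amalgam norm). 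Because $\phi_{ij}$ is compactly supported, on $\mathrm{supp}\,\phi_{ij}$ only $O(1)$ of the rescaled pieces $\eta\mapsto(h\rho(\cdot-k))(x+t\eta)$ are nonzero, uniformly in $x,t$, and each has the same $\CF L^1$ norm as $h\rho(\cdot-k)$ by translation- and dilation-invariance; multiplying by $\phi_{ij}$ and using the algebra property gives $\|\phi_{ij}\,(\partial_i\partial_j\mu)(x+t\,\cdot)\|_{\CF L^1}\lesssim\|h\|_{W(\CF L^1,L^\infty)}$. Summing over $i,j$ and integrating in $t$ yields $B\lesssim\sum_{|\g|=2}\|\partial^\g\mu\|_{W(\CF L^1,L^\infty)}$, and chaining the estimates proves $e^{i\mu}\in W(\CF L^1,L^\infty)$.

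The only real obstacle is this last uniform control of the local $\CF L^1$ norm of $\partial_i\partial_j\mu$ after the affine substitution $\eta\mapsto x+t\eta$ with $t$ possibly near $0$; what makes it go through is precisely the combination of the dilation-, translation- and modulation-invariance of the $\CF L^1$ norm with the fact that membership in $W(\CF L^1,L^\infty)$ gives bounds on all unit-scale local pieces of $\partial^\g\mu$ that are uniform in position. This is where the hypothesis is used, and it is also what allows genuinely oscillatory second derivatives such as $\cos|\xi|^2$ --- which lie in $W(\CF L^1,L^\infty)$ though not in $M^{\infty,1}$ --- to be admissible.
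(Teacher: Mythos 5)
Your proof is correct and follows essentially the same route as the paper's: reduce to showing $e^{i\mu}\in W(\CF L^1,L^\infty)$ (so that the known multiplier lemma applies), Taylor expand to second order around the centre of each localizing patch, absorb the affine part by modulation invariance, bound the localized second-order remainder uniformly in the patch centre using the hypothesis on $\partial^\gamma\mu$, and exponentiate via the power series in the Banach algebra $\CF L^1$. The only difference is one of implementation: where the paper's Lemma 3.1 invokes the dilation estimate $\|U_t f\|_{W(\CF L^1,L^\infty)}\lesssim\|f\|_{W(\CF L^1,L^\infty)}$ for $0<t\le 1$ together with the pointwise-module property of $W(\CF L^1,L^\infty)$, you prove the needed local bound $\|\phi_{ij}\,(\partial_i\partial_j\mu)(x+t\,\cdot)\|_{\CF L^1}\lesssim\|\partial_i\partial_j\mu\|_{W(\CF L^1,L^\infty)}$ by hand via a partition of unity and the exact translation/dilation invariance of the $\CF L^1$ norm, which is a valid self-contained substitute.
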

Observe that $M^{\infty,1}(\R^d)\subset W(\CF L^1, L^{\infty})(\R^d)\subset L^\infty(\R^d)$ so that this result improves that in (b) above. Here the second derivatives of $\mu$ are still bounded, but they are allowed to oscillate, say, as $\cos |\xi|^2$ (cf.\ \cite[Theorem 14]{benyi}). This result is strongly inspired by \cite[Lemma 2.2]{Concetti_2009_Schatten} and in fact the proof is similar. However, our main result deals with the case of possibly unbounded second derivatives, as stated in the following theorem.
\begin{theorem} (Growth at infinity, strong oscillations). \label{caso_pq}
Let $\alpha\geq2$. Let 
$\mu\in C^2(\R^d)$,
real-valued and such that 
\[
\langle \xi \rangle^{2-\a} \partial^{\g}\mu(\xi) \in W(\CF L^1, L^{\infty})(\R^d)\quad{\it for}\ |\g|=2.
\]
Then
\[
e^{i\mu(D)}:M^{p,q}_{\delta}(\R^d)\rightarrow
 M^{p,q}(\R^d)
 \]
 is bounded
 for every $1\leq
p,q\leq\infty$ and
  \begin{equation}
  \label{soglia}
  \delta \geq d(\alpha-2)\left|\frac{1}{p}-\frac{1}{2}
  \right|.
  \end{equation}
\end{theorem}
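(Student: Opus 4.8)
The plan is to reduce, via a Littlewood--Paley decomposition in frequency, to a single-scale estimate, and then to recover the sharp exponent in \eqref{soglia} by interpolation. First I would fix a dyadic partition of unity $1=\sum_{j\ge0}\psi_j$ on $\R^d$, with $\psi_0$ supported in $\{|\xi|\le 2\}$ and $\psi_j(\xi)=\psi(2^{-j}\xi)$ supported in the annulus $\{2^{j-1}\le|\xi|\le 2^{j+1}\}$ for $j\ge1$, and I would set $f_j=\psi_j(D)f$, so that $\widehat{f_j}$ is supported in the $j$-th annulus. Since a unit cube in frequency meets only boundedly many of these annuli and $\la\xi\ra^{\delta}\approx 2^{j\delta}$ there, one has the standard equivalences
\begin{gather*}
\|g\|_{M^{p,q}}\approx\Big(\sum_{j\ge0}\|\psi_j(D)g\|_{M^{p,q}}^q\Big)^{1/q},\\
\|f\|_{M^{p,q}_{\delta}}\approx\Big(\sum_{j\ge0}2^{j\delta q}\|f_j\|_{M^{p,q}}^q\Big)^{1/q}
\end{gather*}
(with the obvious modification for $q=\infty$). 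As $e^{i\mu(D)}$ commutes with the frequency projections, $\psi_j(D)e^{i\mu(D)}f=e^{i\mu(D)}f_j$, so it is enough to prove the single-scale estimate
\begin{equation}\label{plan:piece}
\|e^{i\mu(D)}f_j\|_{M^{p,q}}\lesssim 2^{j d(\a-2)\left|\frac1p-\frac12\right|}\,\|f_j\|_{M^{p,q}}
\end{equation}
uniformly in $j\ge0$, for $f_j$ with $\widehat{f_j}$ supported in the $j$-th annulus: inserting \eqref{plan:piece} into the first equivalence and comparing with the second (using $j\ge0$ and \eqref{soglia}) gives the theorem.

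To prove \eqref{plan:piece} I would first treat the endpoints $p\in\{1,\infty\}$. From $W(\CF L^1,L^\infty)\subset L^\infty$ applied to $\la\xi\ra^{2-\a}\partial^{\g}\mu$ one gets $|\partial^{\g}\mu(\xi)|\lesssim\la\xi\ra^{\a-2}$ for $|\g|=2$, hence $|\partial^{\g}\mu|\lesssim 2^{j(\a-2)}$ on the $j$-th annulus. Writing $e^{i\mu(D)}f_j=\sigma_j(D)f_j$ with $\sigma_j=e^{i\mu}\psi_j'$, $\psi_j'$ a cut-off equal to $1$ on $\mathrm{supp}\,\psi_j$ and supported slightly inside the doubled annulus, the crucial point is
\begin{equation}\label{plan:symbol}
\|\sigma_j\|_{W(\CF L^1,L^\infty)}\lesssim 2^{j d(\a-2)/2}.
\end{equation}
For this I would cover the annulus by essentially disjoint cubes $Q$ of side $\ell_j:=2^{-j(\a-2)/2}\le 1$, with a subordinate partition of unity $\{\chi_Q\}$ obtained by dilating and translating a single bump. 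On a cube $Q$ centred at $\xi_Q$, Taylor's formula gives $\mu(\xi)=\mu(\xi_Q)+\nabla\mu(\xi_Q)\cdot(\xi-\xi_Q)+r_Q(\xi)$ with $r_Q$ the integral quadratic remainder; the affine part of the phase contributes only a unimodular constant and a modulation, both harmless for the $\CF L^1$-norm. After the change of variables $\xi=\xi_Q+\ell_j\eta$ (which preserves the $\CF L^1$-norm, being affine), the rescaled remainder $\widetilde r_Q(\eta)=r_Q(\xi_Q+\ell_j\eta)$ has $\CF L^1$-norm on the unit cube bounded by $\ell_j^2\cdot 2^{j(\a-2)}\lesssim 1$, uniformly in $Q$ and $j$; here it is essential that the hypothesis controls $\partial^{\g}\mu$ in $W(\CF L^1,L^\infty)$, hence locally in $\CF L^1$ near $\xi_Q$ with norm $\lesssim\la\xi_Q\ra^{\a-2}\approx 2^{j(\a-2)}$, and not merely in sup-norm. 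Since $\CF L^1$ is a Banach algebra, $\|e^{i\widetilde r_Q}\chi\|_{\CF L^1}\lesssim 1$, and undoing the change of variables gives $\|e^{i\mu}\chi_Q\|_{\CF L^1}\lesssim 1$; summing over the $\lesssim\ell_j^{-d}=2^{j d(\a-2)/2}$ cubes $Q$ that meet a fixed unit cube yields \eqref{plan:symbol}. By the boundedness of Fourier multipliers with symbol in $W(\CF L^1,L^\infty)$ on every $M^{p,q}$ (\cite[Lemma 8]{benyi}), \eqref{plan:symbol} gives \eqref{plan:piece} with the exponent $d(\a-2)/2$ in place of $d(\a-2)|1/p-1/2|$, for all $p,q$ --- and this is exactly \eqref{plan:piece} when $p\in\{1,\infty\}$.

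It then remains to upgrade the exponent to the sharp $d(\a-2)|1/p-1/2|$ for $1<p<\infty$. I would interpolate against the trivial case $p=2$: since $e^{i\mu(D)}$ is unitary on $L^2$ and commutes with the unit-cube projections $\square_k$, it is an isometry on every $M^{2,q}$, so \eqref{plan:piece} holds with no loss at $p=2$. Complex interpolation in $p$, at fixed $q$, between $p=1$ and $p=2$ yields \eqref{plan:piece} for $1\le p\le 2$; the range $2\le p\le\infty$ then follows by duality, since $\big(e^{i\mu(D)}\big)^{\ast}=e^{-i\mu(D)}$ has a phase satisfying the same hypothesis, $\big(M^{p,q}_{\delta}\big)'=M^{p',q'}_{-\delta}$, $\la D\ra^{-\delta}$ commutes with $e^{-i\mu(D)}$, and $|1/p'-1/2|=|1/p-1/2|$. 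This proves \eqref{plan:piece}, and hence the theorem.

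The step I expect to be the main obstacle is the single-scale symbol estimate \eqref{plan:symbol} with the sharp exponent $d(\a-2)/2$: applying the Banach-algebra bound for $\CF L^1$ directly on unit cubes would produce a useless $\exp\!\big(c\,2^{j(\a-2)}\big)$, so one is forced to work at the critical sub-scale $\ell_j=2^{-j(\a-2)/2}$ on which the quadratic Taylor remainder of $\mu$ becomes $O(1)$, and making the remainder estimate rigorous at that scale requires the full strength of the hypothesis $\la\xi\ra^{2-\a}\partial^{\g}\mu\in W(\CF L^1,L^\infty)$ together with the dilation-invariance of the $\CF L^1$-norm. By contrast, passing from the exponent $1/2$ to $|1/p-1/2|$ is soft, relying only on the $L^2$-unitarity of $e^{i\mu(D)}$ and interpolation.
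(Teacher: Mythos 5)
Your proposal is correct, and while the overall architecture matches the paper's (dyadic decomposition in frequency, endpoint estimates at $p\in\{1,\infty\}$ and the trivial $L^2$-based bound at $p=2$, then complex interpolation and duality), the mechanism behind the key endpoint estimate is genuinely different. The paper conjugates each dyadic piece $T^{(j)}$ by the dilation $U_{\lambda_j}$ with $\lambda_j=2^{-j(\a-2)/2}$, so that the rescaled phase $(\chi_j\mu)(\lambda_j\cdot)$ has second derivatives uniformly bounded in $W(\CF L^1,L^\infty)$; Theorem \ref{lem:A_bound} then applies uniformly in $j$, and the loss $2^{jd(\a-2)/2}$ is recovered from the sharp Sugimoto--Tomita dilation estimates for $M^{p,q}$ (Theorem \ref{dilprop}). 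This forces the Leibniz terms $\partial^\gamma\chi_j\cdot\mu$ and $\partial^\gamma\chi_j\cdot\partial^\beta\mu$ into the picture, which is why the paper needs Lemma \ref{lem:partial_mu_W} controlling $\la\xi\ra^{-\a}\mu$ and $\la\xi\ra^{1-\a}\nabla\mu$. You instead estimate the symbol piece directly, $\|e^{i\mu}\psi_j'\|_{W(\CF L^1,L^\infty)}\lesssim 2^{jd(\a-2)/2}$, by tiling the annulus with cubes at the same critical scale $\ell_j=\lambda_j$, discarding the affine Taylor part on each cube as an exact modulation (so no global control of $\mu$ or $\nabla\mu$ is needed), and paying the loss by counting cubes per unit ball rather than through dilation operator norms; Lemma \ref{lem:s_in_W_bounded} then gives the operator bound for \emph{all} $p,q$ at once. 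Your route is more self-contained --- it avoids Theorem \ref{dilprop} and Lemma \ref{lem:partial_mu_W}(i)--(ii) entirely, using only that $\CF L^1$ is a translation-, modulation- and dilation-invariant Banach algebra plus the contraction invariance of $W(\CF L^1,L^\infty)$ --- at the price of having to justify the $\CF L^1$ bound on the rescaled quadratic remainder using the $W(\CF L^1,L^\infty)$ hypothesis rather than pointwise bounds; that computation (localize $\partial^\gamma\mu$ near $\xi_Q$ with norm $\lesssim\la\xi_Q\ra^{\a-2}$, rescale, Minkowski in $t$) is of exactly the same nature as the paper's Lemma \ref{lem:partial_mu_W} and Lemma 3.2, and you correctly identify it as the crux. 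Your two-sided dyadic norm equivalence for $M^{p,q}$ and $M^{p,q}_\delta$ (valid by finite overlap of the annuli with the unit-cube decomposition) subsumes the paper's summation Lemmas \ref{sum_1} and \ref{sum_infinito}, and your interpolation of the single-scale estimate in $p$ at fixed $j$, followed by duality for $p>2$, reproduces the exponent $d(\a-2)|1/p-1/2|$ exactly as the paper's global interpolation over the six corner points does.
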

The above threshold for $\delta$ agrees with that in (c), and also with the examples in \cite[Theorem 16]{benyi}, where even stronger oscillations were considered, but only for model cases.   \par
Theorem \ref{lem:A_bound} is of course a particular case of Theorem  \ref{caso_pq} and will be used as a step in the proof of the latter. \par
There are a number of easy extensions that could be considered. For example, the conclusion of Theorem  \ref{lem:A_bound} still holds for a phase of the type $\mu(\xi)=\mu_1(\xi)+\mu_2(\xi)$, where $\mu_1$ satisfies the assumption in that theorem and $\mu_2\in W(\CF L^1, L^{\infty})(\R^d)$. Hence one could allow phases that in a compact set do not have any derivatives but only $\CF L^1$ regularity, such as a positively homogeneous function of order $r>0$. We leave these easy developments to the interested reader. \par\medskip
In short the paper is organized as follows. In Section \ref{sec2} we collected a number of definitions and auxiliary results. Section \ref{sec3} is devoted to the proof of Theorem \ref{lem:A_bound}, whereas in Section \ref{sec4} we prove Theorem \ref{caso_pq}. \par\medskip

\textbf{Notation.} We define
$|x|^2=x\cdot x$, for
$x\in\R^d$, where $x\cdot
y=xy$ is the scalar product
on $\R^d$. We set $B_R(x_0)$
for the open ball in $\R^d$
of centre $x_0$ and radius
$R$. The space of smooth
functions with compact
support is denoted by
$C_0^\infty(\R^d)$, the
Schwartz class is
$\mathcal{S}(\R^d)$, the space of
tempered distributions
$\mathcal{S}'(\R^d)$.    The Fourier
transform is normalized to be
${\hat
  {f}}(\xi)=\CF f(\xi)=\int
f(t)e^{-2\pi i t\xi}dt$.
 Translation and modulation operators are defined, respectively, by
$$ T_xf(t)=f(t-x)\quad{\rm and}\quad M_{\xi}f(t)= e^{2\pi i \xi
 t}f(t).$$
We have the formulas
$(T_xf)\hat{} = M_{-x}{\hat
{f}}$, $(M_{\xi}f)\hat{}
=T_{\xi}{\hat {f}}$, and
$M_{\xi}T_x=e^{2\pi i
x\xi}T_xM_{\xi}$. The inner
product of two functions
$f,g\in L^2(\R^d)$ is $\la f, g
\ra=\int_{\R^d}f(t)
\overline{g(t)}\,dt$, and its
extension to $\mathcal{S}'(\R^d)\times \mathcal{S}(\R^d)$
will be also denoted by  $\la
\cdot, \cdot \ra$. The
notation $A\lesssim B$ means
$A\leq c B$ for a suitable
constant $c>0$ depending only on the dimension $d$ and Lebesgue exponents $p,q,\ldots,$ arising in the context, whereas $A
\asymp B$ means $A\lesssim B$, and $B\lesssim A$.  The notation $B_1
\hookrightarrow B_2$ denotes
the continuous embedding of
the space $B_1$ into $B_2$.

\section{Preliminary results}\label{sec2}
In this section we recall the
definition and some
properties of modulation and Wiener amalgam
spaces, which will be used
later on. We refer to
\cite{F1,
Feichtinger-grochenig89,grochenig,triebel83}
for the general theory.\par
We consider the functions
$\la\xi\ra^s:=(1+|\xi|^2)^{s/2}$,
$s\in\R$, as weight
functions. Then, weighted
modulation spaces are defined
in terms of the following time-frequency representation: the
short-time Fourier transform
(STFT) $V_gf$ of a
function/tempered
distribution $f\in \mathcal{S}'(\R^d)$
with respect to the window
$g\in \mathcal{S}(\R^d)\setminus\{0\}$ is defined by
\begin{equation}\label{STFT}
V_g f(x,\xi)=\la f,M_\xi T_xg\ra= \int_{\R^d}
e^{-2\pi i \xi
y}f(y)\overline{g(y-x)}\,dy,
\end{equation}
i.e.\ the  Fourier transform of $f\overline{T_x g}$.

Given a window
$g\in \mathcal{S}(\R^d)\setminus\{0\}$, $1\leq
p,q\leq \infty$ and
$\delta\in\R$, the {\it
  modulation space} $M^{p,q}_\delta(\R^d)$,
   consists of all tempered
distributions
$f\in \mathcal{S}'(\R^d)$ such that
$V_gf\in
L^{p,q}_{1\otimes\langle\cdot\rangle^\delta}(\R^{2d})$
(weighted mixed-norm Lebesgue space).
The norm on $M^{p,q}_\delta$
is therefore defined as
$$
\|f\|_{M^{p,q}_\delta}=\|V_gf\|_{L^{p,q}_{1\otimes\langle\cdot\rangle^\delta}}=\left(\int_{\R^d}
  \left(\int_{\R^d}|V_gf(x,\xi)|^p \langle\xi\rangle^
  {\delta p}\,
    dx\right)^{q/p}d\xi\right)^{1/p}
$$
(with obvious changes when
$p=\infty$ or $q=\infty$). If
$p=q$, we write $M^p_\delta$
instead of $M^{p,p}_\delta$,
and if $\delta=0$ we write
$M^{p,q}$ and $M^p$ for
$M^{p,q}_0$ and $M^{p,p}_0$,
respectively. Then
$M^{p,q}_\delta(\R^d)$ is a Banach
space and different windows $g\in \mathcal{S}(\R^d)\setminus\{0\}$ give equivalent norms. For the
properties of these spaces we
refer to the literature
quoted at the beginning of
this subsection.\par
For $1\leq p,q\leq\infty$ the Wiener amalgam space $W(\CF L^p,L^q)(\R^d)$ consists of the temperate distributions $f\in\mathcal{S}'(\R^d)$ such that 
\[
\|f\|_{W(\CF L^p,L^q)}=\Big(\int_{\R^d}\|g(\cdot-x)f\|^q_{\CF L^p}\, dx\Big)^{1/q}<\infty,
\]
where $g\in \mathcal{S}(\R^d)\setminus\{0\}$ is an arbitrary window (with obvious changes if $q=\infty$). It is easy to show that  the Fourier transform establishes an isomorphism 
\[
\CF: M^{p,q}(\R^d) \to W(\CF L^p, L^{q})(\R^d).
\]

The duality theory goes as expected, namely
 $(M^{p,q}_\delta)^*=M^{p',q'}_{-\delta}$, with $1\leq
 p,q<\infty$, $p',q'$ being the conjugate
 exponents.\par
Here is the basic complex interpolation
result (see e.g.
\cite{F2,F1,Feichtinger-grochenig89} and \cite[Theorem 2.3]{baoxiang3} for a direct proof).
\begin{proposition}\label{interp} Let $0<\theta<1$,
 $p_j,q_j\in [1,\infty]$ and $\delta_j\in\R$, $j=1,2$.
Set
$$ \frac1p=\frac{1-\theta}{p_1}+\frac{\theta}{p_2},\quad \frac1q=\frac{1-\theta}{q_1}+\frac{\theta}{q_2},\quad \delta=(1-\theta)\delta_1+
{\theta}\delta_2.$$ Then
$$(M^{p_1,q_1}_{\delta_1}(\R^d),M^{p_2,q_2}_{\delta_2}(\R^d))_{[\theta]}=M^{p,q}_{\delta}(\R^d).$$
\end{proposition}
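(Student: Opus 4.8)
The plan is to realize each weighted modulation space $M^{p,q}_\delta(\R^d)$ as a retract of the weighted mixed-norm Lebesgue space $L^{p,q}_{1\otimes\la\cdot\ra^\delta}(\R^{2d})$, uniformly in $p,q,\delta$, and then transport the classical complex interpolation identity for the latter spaces through the retraction. Fix once and for all a window $g\in\mathcal{S}(\R^d)$ with $\|g\|_{L^2}=1$, and consider the analysis operator $V_g$ together with the synthesis operator $S$, the latter defined on a suitable function $F$ on $\R^{2d}$ (and in general as a tempered distribution by duality) by $SF=\int_{\R^{2d}}F(x,\xi)\,M_\xi T_x g\,dx\,d\xi$. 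The STFT inversion formula gives $S\circ V_g=\mathrm{id}$ on $\mathcal{S}'(\R^d)$, while $P:=V_g\circ S$ is the integral operator on $\R^{2d}$ with kernel $K(x,\xi,x',\xi')=\la M_{\xi'}T_{x'}g,\,M_\xi T_x g\ra$, which equals $V_gg(x-x',\xi-\xi')$ up to a unimodular factor. Since $g\in\mathcal{S}$, for every $N$ there is $C_N$ with $|K(x,\xi,x',\xi')|\le C_N\la x-x'\ra^{-N}\la \xi-\xi'\ra^{-N}$; combining this with the fact that the weight $1\otimes\la\cdot\ra^\delta$ is $(1\otimes\la\cdot\ra^{|\delta|})$-moderate, a Schur-type estimate shows that $P$ is bounded on $L^{p,q}_{1\otimes\la\cdot\ra^\delta}(\R^{2d})$ for all $1\le p,q\le\infty$ and $\delta\in\R$.

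By the very definition of the modulation-space norm, $V_g\colon M^{p,q}_\delta(\R^d)\to L^{p,q}_{1\otimes\la\cdot\ra^\delta}(\R^{2d})$ is an isometry; moreover $\|SF\|_{M^{p,q}_\delta}=\|V_gSF\|_{L^{p,q}_{1\otimes\la\cdot\ra^\delta}}=\|PF\|_{L^{p,q}_{1\otimes\la\cdot\ra^\delta}}\lesssim\|F\|_{L^{p,q}_{1\otimes\la\cdot\ra^\delta}}$, so $S$ is bounded in the reverse direction. Together with $S\circ V_g=\mathrm{id}$, this exhibits $M^{p,q}_\delta(\R^d)$ as a retract of $L^{p,q}_{1\otimes\la\cdot\ra^\delta}(\R^{2d})$, with retraction $S$ and coretraction $V_g$, and with $P=V_gS$ a bounded projection onto $V_g(M^{p,q}_\delta)$. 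Crucially, the operators $V_g$ and $S$ are the same for all values of $p,q,\delta$ (they are merely restrictions of fixed maps on $\mathcal{S}'$ and on distributions on $\R^{2d}$), and both couples in play are compatible, since $M^{p,q}_\delta(\R^d)\hookrightarrow\mathcal{S}'(\R^d)$ and $L^{p,q}_{1\otimes\la\cdot\ra^\delta}(\R^{2d})\hookrightarrow L^1_{\mathrm{loc}}(\R^{2d})$.

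Next I invoke the standard principle that complex interpolation commutes with retracts (see e.g.\ Bergh--L\"ofstr\"om): if $\iota\colon A_j\to B_j$ and $\rho\colon B_j\to A_j$ are bounded with $\rho\circ\iota=\mathrm{id}$ for $j=1,2$, then $\rho$ maps $(B_1,B_2)_{[\theta]}$ onto $(A_1,A_2)_{[\theta]}$ with $\|a\|_{(A_1,A_2)_{[\theta]}}\asymp\|\iota a\|_{(B_1,B_2)_{[\theta]}}$. Applying this with $A_j=M^{p_j,q_j}_{\delta_j}(\R^d)$, $B_j=L^{p_j,q_j}_{1\otimes\la\cdot\ra^{\delta_j}}(\R^{2d})$, $\iota=V_g$, $\rho=S$, it remains only to compute $(B_1,B_2)_{[\theta]}$. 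This is the classical complex interpolation of weighted mixed-norm Lebesgue spaces (Calder\'on; Stein--Weiss; Benedek--Panzone for the mixed-norm case): $(L^{p_1,q_1}_{m_1},L^{p_2,q_2}_{m_2})_{[\theta]}=L^{p,q}_{m}$ with $m=m_1^{1-\theta}m_2^{\theta}$ and the stated relations for $p,q$. Taking $m_j=1\otimes\la\cdot\ra^{\delta_j}$ gives $m=1\otimes\la\cdot\ra^{(1-\theta)\delta_1+\theta\delta_2}=1\otimes\la\cdot\ra^{\delta}$, hence $(B_1,B_2)_{[\theta]}=L^{p,q}_{1\otimes\la\cdot\ra^\delta}(\R^{2d})$, and therefore $(M^{p_1,q_1}_{\delta_1},M^{p_2,q_2}_{\delta_2})_{[\theta]}=S\big(L^{p,q}_{1\otimes\la\cdot\ra^\delta}(\R^{2d})\big)=M^{p,q}_\delta(\R^d)$ with equivalent norms.

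The only point requiring genuine care is the uniform boundedness of $P$ on the weighted mixed-norm spaces at the endpoints $p,q\in\{1,\infty\}$, where no density of nice functions is available; this is handled purely by the decay estimate on the kernel $K$ together with the moderateness of the weight, with no approximation, so it causes no real difficulty. Everything else --- the inversion formula, the $g$-independence of the norms, and the retract lemma --- is standard. Alternatively, one may discretize via a Gabor frame for $\mathcal{S}'(\R^d)$ and realize $M^{p,q}_\delta(\R^d)$ as a retract of a weighted mixed-norm sequence space $\ell^{p,q}_{1\otimes\la\cdot\ra^\delta}(\Z^{2d})$, whose complex interpolation is elementary; or simply note that the statement is a special case of the interpolation theory of Feichtinger--Gr\"ochenig coorbit spaces, the spaces $M^{p,q}_\delta$ being exactly the coorbit spaces modeled on $L^{p,q}_{1\otimes\la\cdot\ra^\delta}$.
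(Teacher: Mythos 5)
Your retract argument is, in substance, the standard proof of this proposition, and it coincides with the route taken by the references the paper points to (the paper itself offers no proof, only citations to Feichtinger, Feichtinger--Gr\"ochenig and Wang--Huang): $V_g$ as coretraction, the synthesis operator $S$ as retraction, boundedness of $P=V_gS$ on $L^{p,q}_{1\otimes\la\cdot\ra^{\delta}}(\R^{2d})$ via the rapid decay of $|V_gg|$ and the moderateness of the weight, and then the retract principle for the complex method; your Gabor-frame and coorbit alternatives are just the discrete incarnations of the same idea. All of those steps are correct.

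However, the step you dismiss as ``classical'' is in fact the only delicate one, and as stated it has a gap: the identification $(L^{p_1,q_1}_{m_1},L^{p_2,q_2}_{m_2})_{[\theta]}=L^{p,q}_{m}$ is not covered by Calder\'on, Stein--Weiss or Benedek--Panzone in the full range $1\le p_j,q_j\le\infty$; the classical identifications are proved under finiteness (order-continuity) assumptions on the exponents. For the first complex method the intersection $B_1\cap B_2$ is always dense in $(B_1,B_2)_{[\theta]}$, and this breaks the identity precisely in the doubly-infinite configurations: for instance, when $q_1=q_2=\infty$ (the only way to reach a target with $q=\infty$, which is exactly what the paper needs in Section 4) one can check that $L^{1,\infty}\cap L^{\infty,\infty}$ is not dense in $L^{p,\infty}$ for $1<p<\infty$ (take slices that are flat bumps of height $n^{-d/p}$ on balls of radius $n$ for $|\xi|\sim n$), so $(L^{1,\infty},L^{\infty,\infty})_{[\theta]}\subsetneq L^{p,\infty}$; the case $p_1=p_2=\infty$ is analogous. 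Hence your argument is complete whenever at least one of $p_1,p_2$ and at least one of $q_1,q_2$ is finite, but at the remaining endpoint configurations you must either restrict the statement, use a variant of the complex method (the second/upper method or Gagliardo completions), or appeal to the specific treatments in the cited references. Note also that the failure of the identity for the couple $(B_1,B_2)$ does not by itself settle the modulation-space identity, since STFTs lie in the range of the projection $P$; so at those endpoints genuinely more work is needed, and it is exactly there that your ``only point requiring genuine care'' (the Schur estimate for $P$, which is in fact routine) is not where the difficulty lies.
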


We now recall the dilation
properties. For $(1/p,1/q)\in
[0,1]\times [0,1]$, we define
the subsets
$$ I_1:\ \max (1/p,1/p')\leq 1/q,\quad\quad I_1^*:\ \min (1/p,1/p')\geq 1/q,
$$
$$ I_2:\ \max (1/q,1/2)\leq 1/p',\quad\quad I_2^*:\ \min (1/q,1/2)\geq  1/p',
$$
$$ I_3:\ \max (1/q,1/2)\leq 1/p,\quad\quad I_3^*:\ \min (1/q,1/2)\geq
1/p,
$$
as shown in Figure 1 below. 
\begin{figure}
           \includegraphics[width=12truecm]{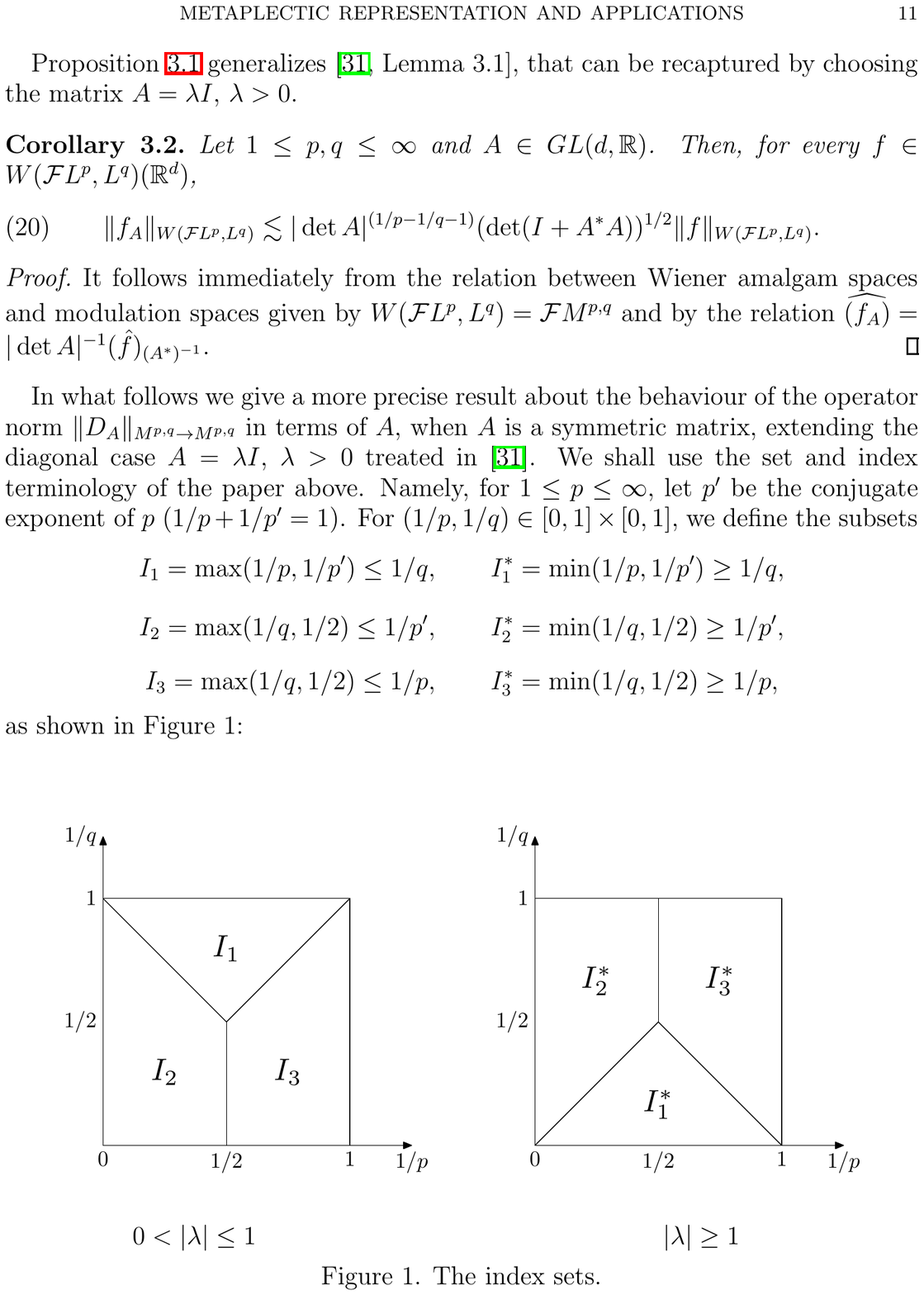}
\caption{The index sets}           
\end{figure}

We introduce the indices:
\begin{equation}\label{iin1}
 \mu_1(p,q)=\begin{cases}-1/p &  \quad {\mbox{if}}
 \quad (1/p,1/q)\in  I_1^*,\\
 1/q-1 &   \quad {\mbox{if}}  \quad (1/p,1/q)\in  I_2^*,\\
 -2/p +1/q&  \quad  {\mbox{if}}
 \quad (1/p,1/q)\in  I_3^*,\\
 \end{cases}
\end{equation}
and
\begin{equation}\label{iin2} \mu_2(p,q)=\begin{cases}-1/p &  \quad {\mbox{if}} \quad (1/p,1/q)\in  I_1,\\
 1/q-1 &   \quad {\mbox{if}}  \quad (1/p,1/q)\in  I_2,\\
 -2/p +1/q&  \quad  {\mbox{if}}  \quad (1/p,1/q)\in  I_3.\\
 \end{cases}
 \end{equation}
Here is the main result about
the behaviour of the dilation
operator in modulation spaces. Set
$U_\lambda f(x):=f(\lambda
x)$, $\lambda\not=0$.
\begin{theorem}{\cite[Theorem
3.1]{sugimoto-tomita}}\label{dilprop}
Let $1\leq p,q\leq\infty,$ and
$\lambda\not=0$.\par\medskip
 (i) We have
$$\| U_\lambda f\|_{M^{p,q}}\lesssim |\lambda|^{d\mu_1(p,q)}
\|f\|_{M^{p,q}},\quad\quad\forall\
|\lambda|\geq 1,\ \forall
f\in M^{p,q}(\R^d).
$$
Conversely, if there exists $\a\in\R$
such that
$$\| U_\lambda f\|_{M^{p,q}}\lesssim |\lambda|^{\a}\|f\|_{M^{p,q}},
\quad\quad \forall\ |\lambda|\geq 1,\
\forall f\in M^{p,q}(\R^d),
$$
then $\a\geq d\mu_1(p,q)$.\par\medskip
 (ii) We
have
$$\| U_\lambda f\|_{M^{p,q}}\lesssim |\lambda|^{d\mu_2(p,q)}
\|f\|_{M^{p,q}},\quad\quad\forall\
0<|\lambda|\leq 1,\ \forall
f\in M^{p,q}(\R^d).
$$
Conversely, if there exists $\b\in\R$
such that
$$\| U_\lambda f\|_{M^{p,q}}\lesssim |\lambda|^{\b}\|f\|_{M^{p,q}},
\quad\quad \forall\ 0<|\lambda|\leq 1,\
\forall f\in M^{p,q}(\R^d),
$$
then $\b\leq d\mu_2(p,q)$.
\end{theorem}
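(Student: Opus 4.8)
Here is how I would go about proving Theorem~\ref{dilprop}.

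The plan is to pass to the uniform (Wiener-type) decomposition of $M^{p,q}$, where the mechanism behind the statement — a contraction by $0<\lambda\le1$ merges about $\lambda^{-d}$ unit frequency cubes into a single one — is transparent, and then to reduce the general exponents to six endpoint cases by complex interpolation. As a first reduction, $U_{-1}$ is an isometry of every $M^{p,q}(\R^d)$, so one may assume $\lambda>0$; furthermore $U_\lambda^{\ast}=\lambda^{-d}U_{1/\lambda}$ together with $(M^{p,q})^{\ast}=M^{p',q'}$ (treating the non-reflexive endpoints in the customary way) gives $\|U_\lambda\|_{M^{p,q}\to M^{p,q}}=\lambda^{-d}\|U_{1/\lambda}\|_{M^{p',q'}\to M^{p',q'}}$, and a direct check of the three cases shows $\mu_1(p,q)+\mu_2(p',q')=-1$, with $(1/p,1/q)\in I_i^{\ast}$ exactly when $(1/p',1/q')\in I_i$. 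Hence (i) and (ii) are equivalent, as are their two ``conversely'' parts, and it suffices to prove the upper bound and the sharpness in (ii).

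Fix a bounded admissible partition of unity $\{\sigma_k\}_{k\in\Z^d}$, $\sigma_k=\sigma(\cdot-k)$ with $\operatorname{supp}\sigma\subset(-1,1)^d$, and recall the equivalent norm $\|f\|_{M^{p,q}}\asymp\big\|\big(\|\sigma_k(D)f\|_{L^p}\big)_k\big\|_{\ell^q}$. For $0<\lambda\le1$ a change of variables gives $\sigma_k(D)(U_\lambda f)=U_\lambda\,m_{k,\lambda}(D)f$ with $m_{k,\lambda}(\xi)=\sigma(\lambda\xi-k)$; the convolution kernel of $m_{k,\lambda}(D)$ is $\lambda^{-d}e^{2\pi i xk/\lambda}\check\sigma(x/\lambda)$, of $L^1$-norm $\|\check\sigma\|_{L^1}$, so $\|m_{k,\lambda}(D)\|_{L^p\to L^p}\lesssim1$ uniformly in $k,\lambda$. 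The set $\operatorname{supp}m_{k,\lambda}=\lambda^{-1}(k+(-1,1)^d)$ is a cube of side $\asymp\lambda^{-1}$, hence covered by the block $J(k)=\{j:\operatorname{supp}\sigma_j\cap\operatorname{supp}m_{k,\lambda}\ne\emptyset\}$ with $|J(k)|\asymp\lambda^{-d}$, and each $j$ belongs to only $O(1)$ of the blocks; consequently $m_{k,\lambda}(D)f=m_{k,\lambda}(D)\sum_{j\in J(k)}\sigma_j(D)f$, and since $\|U_\lambda g\|_{L^p}=\lambda^{-d/p}\|g\|_{L^p}$ we obtain $\|\sigma_k(D)(U_\lambda f)\|_{L^p}\lesssim\lambda^{-d/p}\big\|\sum_{j\in J(k)}\sigma_j(D)f\big\|_{L^p}$.

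It remains to bound $\big\|\sum_{j\in J(k)}\sigma_j(D)f\big\|_{L^p}$ by a power of $|J(k)|\asymp\lambda^{-d}$ times a suitable $\ell^r$-norm of the $\|\sigma_j(D)f\|_{L^p}$, $j\in J(k)$, and then to reassemble in $\ell^q_k$. At the six exponents $(p,q)\in\{(1,1),(2,2),(\infty,\infty),(1,\infty),(\infty,1),(2,\infty)\}$, which are the vertices of the polygons $I_1,I_2,I_3$, this is elementary: for $p\in\{1,\infty\}$ one uses the triangle inequality (an $\ell^1$-sum over the block), for $p=2$ the almost-orthogonality of the frequency-disjoint summands (an $\ell^2$-sum), followed by H\"older against $|J(k)|$ and summation over the essentially disjoint blocks; in each of the six cases the resulting power of $\lambda$ comes out precisely $d\mu_2(p,q)$. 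Since $\mu_2$ is affine on each of $I_1,I_2,I_3$, interpolating (Proposition~\ref{interp}) between appropriate pairs among these six endpoints yields $\|U_\lambda f\|_{M^{p,q}}\lesssim\lambda^{d\mu_2(p,q)}\|f\|_{M^{p,q}}$ for all $1\le p,q\le\infty$ and $0<\lambda\le1$.

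For sharpness in (ii) I would test three families, each built from a Schwartz bump $\psi$ whose spectrum sits in a single cube (so the functions $f$ formed from it have $U_\lambda f$ with spectrum in $O(1)$ cubes, whence $\|U_\lambda f\|_{M^{p,q}}\asymp\|U_\lambda f\|_{L^p}=\lambda^{-d/p}\|f\|_{L^p}$): $\psi$ itself forces $\b\le-d/p$ on $I_1$; the modulation comb $f=\sum_{|j|_\infty\le N}M_j\psi$ with $N\asymp\lambda^{-1}$, for which $\|f\|_{M^{p,q}}\asymp N^{d/q}\|\psi\|_{L^p}$ and $\|U_\lambda f\|_{L^p}=\lambda^{-d/p}\|\psi\prod_iD_N\|_{L^p}\asymp\lambda^{-d/p}N^{d/p'}$ by the Dirichlet-kernel bound ($p\ge2$ throughout $I_2$, so no logarithm intervenes), forces the threshold $\b\le d(1/q-1)$ on $I_2$; and the spatially separated comb $\sum_{|j|_\infty\le N}M_jT_{b_j}\psi$ along a coarse lattice of points $b_j$, for which the translates are disjointly supported and $\|U_\lambda f\|_{L^p}\asymp\lambda^{-d/p}N^{d/p}\|\psi\|_{L^p}$, gives the ratio $\lambda^{-d(2/p-1/q)}$ and hence the threshold on $I_3$; these three regions cover the square, and the ``conversely'' part of (i) follows from that of (ii) by the duality of the first paragraph. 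The delicate points are the uniform-in-$(k,\lambda)$ control of $m_{k,\lambda}(D)$ on $L^p$ and the bookkeeping needed to run interpolation on the frequency-localized sequence spaces; the conceptual content — counting how many unit frequency cubes a contraction glues together, and verifying that the six elementary endpoint estimates reproduce exactly the three affine pieces of $\mu_2$ — is precisely what forces the answer to be described by the regions of Figure~1.
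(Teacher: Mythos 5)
The paper offers no proof of Theorem \ref{dilprop}: it is imported verbatim from \cite[Theorem 3.1]{sugimoto-tomita}, so the only meaningful comparison is with that reference, not with anything internal to this article. Your plan is a correct reconstruction and, in places, more transparent: where the original argument works essentially at the level of the short-time Fourier transform (dilating the window and paying for the change of window) before reducing to endpoints via interpolation and duality, you pass to the frequency-uniform decomposition, where the identity $\sigma_k(D)U_\lambda f=U_\lambda m_{k,\lambda}(D)f$, the uniform $L^p$ bound on $m_{k,\lambda}(D)$, and the count $|J(k)|\asymp\lambda^{-d}$ with $O(1)$-overlapping blocks make the exponent $d\mu_2(p,q)$ visibly a cube-counting statement. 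I checked the load-bearing pieces: $\mu_1(p,q)+\mu_2(p',q')=-1$ and $(1/p,1/q)\in I_i^*\Leftrightarrow(1/p',1/q')\in I_i$ hold in all three cases; your six exponent pairs are exactly the vertices of $I_1,I_2,I_3$ and the endpoint computations return $d\mu_2$ at each of them; and the three test families yield the thresholds $-d/p$, $d(1/q-1)$, $d(1/q-2/p)$, whose minimum is $d\mu_2$ region by region. Three places need more than a wave before this is a proof. (1) The duality $\|U_\lambda\|_{M^{p,q}\to M^{p,q}}=\lambda^{-d}\|U_{1/\lambda}\|_{M^{p',q'}\to M^{p',q'}}$ is literal only when $p',q'<\infty$; at the remaining corners either pass through the closure of $\mathcal{S}$ in the predual or, more simply, rerun the decomposition argument directly for $\lambda\ge1$ (it is symmetric: now $\asymp\lambda^{d}$ cubes $k$ feed into a single $j$). (2) Covering the edges with $1/q=0$ of $I_2$ and $I_3$ forces you to interpolate between two spaces both having $q=\infty$, which is a delicate point for complex interpolation of modulation spaces; either lean on Proposition \ref{interp} exactly as the paper itself does, or replace interpolation there by the direct block inequality $\|\sum_{j\in J}g_j\|_{L^p}\lesssim |J|^{\max(1/p,1/p')}\sup_j\|g_j\|_{L^p}$ for frequency-separated pieces. (3) In the third test family the translates of a band-limited bump are only almost disjointly supported, so the lower bound $\|f\|_{L^p}\gtrsim N^{d/p}\|\psi\|_{L^p}$ requires the spacing of the $b_j$ to grow with $N$. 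None of these is a gap in the ideas; they are precisely the bookkeeping you flagged.
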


By a conjugation with the Fourier transform one deduces the following dilation property for Wiener amalgam spaces.

\begin{corollary}{\cite[Corollary 3.2]{Cordero_2008_Metaplectic}}\label{cor2.3}
With the above notation,
\[
\|U_\lambda f\|_{W(\CF
L^1,L^\infty)}\lesssim \|f\|_{W(\CF
L^1,L^\infty)}\quad \forall\ 0<|\lambda|\leq 1,\forall f\in W(\CF L^1,L^\infty)(\R^d).
\]
\end{corollary}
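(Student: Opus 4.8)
The plan is to transfer the estimate to modulation spaces via the Fourier transform. By the remarks in Section~\ref{sec2} the map $\CF\colon M^{1,\infty}(\R^d)\to W(\CF L^1,L^\infty)(\R^d)$ is an isomorphism, so $\|h\|_{W(\CF L^1,L^\infty)}\asymp\|\CF^{-1}h\|_{M^{1,\infty}}$ for every $h\in W(\CF L^1,L^\infty)(\R^d)$. The idea is then to express the dilation of $f$ on the Wiener amalgam side as a dilation of $\CF^{-1}f$ on the modulation side, and invoke Theorem~\ref{dilprop}.

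First I would record how dilations conjugate under the Fourier transform. A direct change of variables (with the normalization of $\CF$ fixed in the Notation) gives
\[
\CF^{-1}(U_\lambda f)=|\lambda|^{-d}\,U_{1/\lambda}(\CF^{-1}f),\qquad \lambda\neq0 .
\]
Applying the isomorphism above to $U_\lambda f$ therefore yields
\[
\|U_\lambda f\|_{W(\CF L^1,L^\infty)}\asymp\|\CF^{-1}(U_\lambda f)\|_{M^{1,\infty}}=|\lambda|^{-d}\,\|U_{1/\lambda}(\CF^{-1}f)\|_{M^{1,\infty}} .
\]

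Next, since $0<|\lambda|\leq1$ we have $|1/\lambda|\geq1$, so Theorem~\ref{dilprop}(i) applies with $p=1$, $q=\infty$ and with $1/\lambda$ in the role of $\lambda$. For $(1/p,1/q)=(1,0)$ one checks from the definitions that the point lies in $I_1^*\cap I_2^*$, and in either description in \eqref{iin1} one gets $\mu_1(1,\infty)=-1$. Hence
\[
\|U_{1/\lambda}(\CF^{-1}f)\|_{M^{1,\infty}}\lesssim |1/\lambda|^{-d}\,\|\CF^{-1}f\|_{M^{1,\infty}}=|\lambda|^{d}\,\|\CF^{-1}f\|_{M^{1,\infty}} .
\]
Combining the last two displays, the factors $|\lambda|^{-d}$ and $|\lambda|^{d}$ cancel, and using once more $\|\CF^{-1}f\|_{M^{1,\infty}}\asymp\|f\|_{W(\CF L^1,L^\infty)}$ we obtain the asserted bound.

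There is no genuine obstacle in this argument: it is essentially bookkeeping. The only two points that require a little care are getting the dilation factor $|\lambda|^{-d}$ right in the Fourier conjugation formula, and correctly evaluating $\mu_1(1,\infty)$ from \eqref{iin1} (the value $-1$ is exactly what makes the two powers of $|\lambda|$ cancel, so that the constant in the final inequality is uniform in $\lambda$); both are immediate once written down.
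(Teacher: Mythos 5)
Your argument is correct and is exactly the route the paper indicates (``by a conjugation with the Fourier transform'' together with the Sugimoto--Tomita dilation estimates of Theorem~\ref{dilprop}): the conjugation formula $\CF^{-1}(U_\lambda f)=|\lambda|^{-d}U_{1/\lambda}(\CF^{-1}f)$ is right, $(1/p,1/q)=(1,0)$ indeed lies in $I_1^*\cap I_2^*$ with $\mu_1(1,\infty)=-1$, and the two powers of $|\lambda|$ cancel as you say. Nothing further is needed.
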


The following proposition can be deduced, via Fourier transform, from the convolution properties of modulation spaces. 

\begin{proposition}{\cite[Proposition 2.5]{Cordero_2008_Metaplectic}}\label{prop2.4}
For every $1\leq
p,q\leq\infty$ we have
\[
\|fu\|_{W(\CF
L^p,L^q)}\lesssim
\|f\|_{W(\CF
L^1,L^\infty)}\|u\|_{W(\CF
L^p,L^q)}.
\]
\end{proposition}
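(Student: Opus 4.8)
Following the hint preceding the statement, the plan is to conjugate with the Fourier transform and reduce to a Young-type convolution inequality for modulation spaces. By the isomorphisms $\CF\colon M^{1,\infty}(\R^d)\to W(\CF L^1,L^\infty)(\R^d)$ and $\CF\colon M^{p,q}(\R^d)\to W(\CF L^p,L^q)(\R^d)$ recorded in Section~\ref{sec2}, write $f=\CF f_0$ and $u=\CF u_0$ with $f_0\in M^{1,\infty}(\R^d)$, $u_0\in M^{p,q}(\R^d)$ and $\|f_0\|_{M^{1,\infty}}\asymp\|f\|_{W(\CF L^1,L^\infty)}$, $\|u_0\|_{M^{p,q}}\asymp\|u\|_{W(\CF L^p,L^q)}$. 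The product $fu$ is then, by definition, the temperate distribution $\CF(f_0*u_0)$ --- this is also the point at which the meaning of the product $fu$ is fixed, $f_0*u_0$ being well defined thanks to the convolution relation for modulation spaces recalled below --- and the convolution theorem gives $\|fu\|_{W(\CF L^p,L^q)}\asymp\|f_0*u_0\|_{M^{p,q}}$.

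Thus everything reduces to the estimate $\|f_0*u_0\|_{M^{p,q}}\lesssim\|f_0\|_{M^{1,\infty}}\|u_0\|_{M^{p,q}}$, i.e.\ the inclusion $M^{1,\infty}(\R^d)*M^{p,q}(\R^d)\hookrightarrow M^{p,q}(\R^d)$. This is the special case $p_1=1$, $q_1=\infty$, $p_2=p$, $q_2=q$ of the general convolution inequality $M^{p_1,q_1}*M^{p_2,q_2}\hookrightarrow M^{p,q}$, valid whenever $\frac{1}{p_1}+\frac{1}{p_2}=1+\frac1p$ and $\frac{1}{q_1}+\frac{1}{q_2}=\frac1q$, which may be cited from the literature on modulation spaces. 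The only bookkeeping is to verify that the indices satisfy these relations: $\frac11+\frac1p=1+\frac1p$ and $\frac1\infty+\frac1q=\frac1q$, as required. This is essentially the whole mathematical content; there is no serious obstacle, the single slightly delicate point being the one addressed below.

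Alternatively, one can give an essentially self-contained proof working directly with the amalgam norms and Young's inequality on the Fourier side. Since different windows give equivalent norms on $W(\CF L^r,L^s)$, fix Gaussians $g_1=g_2:=e^{-\pi|\cdot|^2}$ and $g:=g_1g_2=e^{-2\pi|\cdot|^2}\in\mathcal{S}(\R^d)\setminus\{0\}$. For each $x\in\R^d$ one has $g(\cdot-x)\,fu=(g_1(\cdot-x)f)\,(g_2(\cdot-x)u)$; taking Fourier transforms turns this into a convolution, and since $\CF[g_1(\cdot-x)f]\in L^1(\R^d)$ and $\CF[g_2(\cdot-x)u]\in L^p(\R^d)$ by the very definition of the two amalgam spaces, Young's inequality $L^1*L^p\hookrightarrow L^p$ yields
\[
\|g(\cdot-x)fu\|_{\CF L^p}\le\|g_1(\cdot-x)f\|_{\CF L^1}\,\|g_2(\cdot-x)u\|_{\CF L^p}\le\|f\|_{W(\CF L^1,L^\infty)}\,\|g_2(\cdot-x)u\|_{\CF L^p}.
\]
Raising to the power $q$ and integrating in $x$ (a supremum if $q=\infty$) then gives $\|fu\|_{W(\CF L^p,L^q)}\le\|f\|_{W(\CF L^1,L^\infty)}\|u\|_{W(\CF L^p,L^q)}$, which is the claim. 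The one delicate point here is the same that the Fourier reduction took care of: the factors $g_1(\cdot-x)f$ and $g_2(\cdot-x)u$, hence $f$ and $u$ themselves, are merely temperate distributions, so their product must be interpreted through the Fourier transform --- namely as $\CF^{-1}$ of the $L^1*L^p$ convolution of the transforms --- after which its consistency with the ordinary product on, say, Schwartz functions is routine.
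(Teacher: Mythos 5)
Your proposal is correct and follows exactly the route the paper indicates for this result: the paper cites \cite[Proposition 2.5]{Cordero_2008_Metaplectic} and notes only that it "can be deduced, via Fourier transform, from the convolution properties of modulation spaces," which is precisely your reduction of $\|fu\|_{W(\CF L^p,L^q)}$ to the convolution embedding $M^{1,\infty}\ast M^{p,q}\hookrightarrow M^{p,q}$, with the index bookkeeping done correctly. Your alternative window-by-window argument via Young's inequality on the Fourier side is also sound (it is the standard pointwise-product property of Wiener amalgam spaces) and is a nice self-contained supplement, including the care taken in defining the product $fu$ distributionally.
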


We conclude this preliminary section with a known result (see e.g.\ \cite[Lemma 8]{benyi}), which we recall together with a shorter proof for the benefit of the reader.

\begin{lemma}\label{lem:s_in_W_bounded}
Let $\s \in W(\CF L^1, L^{\infty})$. Then,
$$ \s(D): M^{p,q} \to M^{p,q}$$ is bounded, for every $1 \leq p, q \leq \infty$.
\end{lemma}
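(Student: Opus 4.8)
The plan is to exploit the Fourier-transform isomorphism $\CF\colon M^{p,q}(\R^d)\to W(\CF L^p,L^q)(\R^d)$ recalled in Section~\ref{sec2}, which converts the Fourier multiplier $\s(D)$ into plain multiplication by the symbol $\s$. Precisely, for $f\in M^{p,q}(\R^d)$ one has $\widehat{\s(D)f}=\s\,\hat f$, so that
\[
\|\s(D)f\|_{M^{p,q}}=\|\widehat{\s(D)f}\|_{W(\CF L^p,L^q)}=\|\s\,\hat f\|_{W(\CF L^p,L^q)}.
\]
First I would record this identity carefully (it is immediate from the definitions of $\s(D)$ in \eqref{moltip} and of $V_g$, or can simply be quoted from the stated isomorphism property of $\CF$).

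Next I would invoke Proposition~\ref{prop2.4} with the pair $(p,q)$, which gives
\[
\|\s\,\hat f\|_{W(\CF L^p,L^q)}\lesssim \|\s\|_{W(\CF L^1,L^\infty)}\,\|\hat f\|_{W(\CF L^p,L^q)}.
\]
Combining the two displays and using once more that $\|\hat f\|_{W(\CF L^p,L^q)}=\|f\|_{M^{p,q}}$ yields
\[
\|\s(D)f\|_{M^{p,q}}\lesssim \|\s\|_{W(\CF L^1,L^\infty)}\,\|f\|_{M^{p,q}},
\]
which is exactly the claimed boundedness, with operator norm controlled by $\|\s\|_{W(\CF L^1,L^\infty)}$. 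One should check that the manipulations are legitimate on all of $\mathcal{S}'(\R^d)$ (or first argue on the dense subspace $\mathcal{S}(\R^d)$ and then extend by density/duality), but this is routine given that multiplication by $\s\in W(\CF L^1,L^\infty)\subset L^\infty$ is well defined on tempered distributions whose Fourier transforms lie in $W(\CF L^p,L^q)$.

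There is essentially no serious obstacle here: the statement is a soft consequence of two structural facts — the Fourier-transform isomorphism between $M^{p,q}$ and $W(\CF L^p,L^q)$, and the algebra-module property $W(\CF L^1,L^\infty)\cdot W(\CF L^p,L^q)\hookrightarrow W(\CF L^p,L^q)$ of Proposition~\ref{prop2.4}. The only point requiring a line of care is the transition from $\mathcal{S}(\R^d)$ to general $f\in M^{p,q}(\R^d)$ when $p$ or $q$ is infinite, where one uses the weak-$*$ density of $\mathcal{S}$ and the duality $(M^{p,q})^*=M^{p',q'}$ noted above; alternatively, one works with the integral formula for $V_g(\s(D)f)$ directly and never leaves the realm of absolutely convergent expressions.
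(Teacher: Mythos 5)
Your proposal is correct and follows essentially the same route as the paper's proof: conjugating $\s(D)$ by the Fourier-transform isomorphism $\CF\colon M^{p,q}\to W(\CF L^p,L^q)$ and applying the multiplication estimate of Proposition~\ref{prop2.4}. No issues.
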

\begin{proof}
We can write $ \s(D)= \CF^{-1}\circ A_{\s}\circ \CF$, where $ A_{\s}f(\xi)= \s(\xi)f(\xi)$. Using Proposition \ref{prop2.4} we have
\begin{align*}
    \|A_{\s}f\|_{W(\CF L^p, L^{q})}&= \| \s f\|_{W(\CF L^p, L^{q})}\\
    &\lesssim \| \s\|_{W(\CF L^1, L^{\infty})} \|f\|_{W(\CF L^p, L^{q})},
\end{align*} 
so that $A_{\s}:W(\CF L^p, L^{q}) \to W(\CF L^p, L^{q})$ is bounded, for every $1 \leq p, q \leq \infty$. Hence, since the Fourier transform establishes an isomorphism $\CF: M^{p,q} \to W(\CF L^p, L^{q})$, 
we see that $ \s(D): M^{p,q} \to M^{p,q}$ is bounded too.
\end{proof}

\section{No growth, strong oscillations}\label{sec3}
This section is devoted to the proof of Theorem \ref{lem:A_bound}. We begin with a preliminary result which is strongly inspired by \cite[Lemmas 2.1 and 2.2]{Concetti_2009_Schatten}, where a similar investigation is carried on in the framework of modulation spaces (as opposite to the Wiener amalgam spaces considered here).
\begin{lemma}
Let $f \in W(\CF L^1, L^{\infty})(\R^d)$ and $\chi \in C_0^{\infty}(B)$, where $B$ is an open ball with center at the origin. Let $$ g_{x_0}(x)= \chi(x-x_{0})\int_{0}^1(1-t)f(t(x-x_0)+ x_0)\, dt,$$ for some $x_0 \in \R^d$.\par
 Then $g_{x_0} \in W(\CF L^1, L^{\infty})(\R^d)$, and for some constant $C$ independent of $x_0$ and $f$ we have
$$\|g_{x_0}\|_{W(\CF L^1, L^{\infty})} \leq C \|f\|_{W(\CF L^1, L^{\infty})}.$$
\end{lemma}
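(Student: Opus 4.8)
The plan is to read $g_{x_0}$ as a superposition in $t$ of dilations and translations of $f$, multiplied by a fixed compactly supported cutoff, and to note that each of these operations is under control on $W(\CF L^1,L^\infty)$: dilations $U_t$ with $0<t\le1$ are bounded there, uniformly in $t$, by Corollary \ref{cor2.3}; translations act isometrically, since the amalgam norm is translation invariant directly from its definition; and multiplication by a fixed element of $W(\CF L^1,L^\infty)$ is bounded by Proposition \ref{prop2.4}. The only slightly delicate point is commuting the $t$-integral with the norm.

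Write $h_{x_0,t}(x):=f\big(t(x-x_0)+x_0\big)=f\big(tx+(1-t)x_0\big)$ for $t\in(0,1]$, so that $g_{x_0}(x)=\chi(x-x_0)\int_0^1(1-t)h_{x_0,t}(x)\,dt$. First I would record the identity $h_{x_0,t}=U_t\big(T_{-(1-t)x_0}f\big)$. Then Corollary \ref{cor2.3} (whose constant is uniform for $0<t\le1$), followed by translation invariance of $\|\cdot\|_{W(\CF L^1,L^\infty)}$, gives
\[
\|h_{x_0,t}\|_{W(\CF L^1,L^\infty)}\lesssim\|T_{-(1-t)x_0}f\|_{W(\CF L^1,L^\infty)}=\|f\|_{W(\CF L^1,L^\infty)},
\]
with implied constant independent of $t\in(0,1]$ and of $x_0$.

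Next I would pull the $t$-integral out of the norm. Since $f$ is continuous (it is locally in $\CF L^1$), $G_{x_0}(x):=\int_0^1(1-t)h_{x_0,t}(x)\,dt$ is a well-defined bounded continuous function; fixing a window $\varphi\in\CS(\R^d)\setminus\{0\}$ and any $z\in\R^d$, Fubini on the Fourier side and Minkowski's integral inequality in $L^1(d\xi)$ yield
\[
\|\varphi(\cdot-z)G_{x_0}\|_{\CF L^1}\le\int_0^1(1-t)\|\varphi(\cdot-z)h_{x_0,t}\|_{\CF L^1}\,dt\le\int_0^1(1-t)\|h_{x_0,t}\|_{W(\CF L^1,L^\infty)}\,dt\lesssim\|f\|_{W(\CF L^1,L^\infty)},
\]
and taking the supremum over $z$ gives $\|G_{x_0}\|_{W(\CF L^1,L^\infty)}\lesssim\|f\|_{W(\CF L^1,L^\infty)}$, uniformly in $x_0$. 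Finally $g_{x_0}=\chi(\cdot-x_0)\,G_{x_0}$, and $\|\chi(\cdot-x_0)\|_{W(\CF L^1,L^\infty)}=\|\chi\|_{W(\CF L^1,L^\infty)}$ is a fixed finite constant since $\chi\in\CS(\R^d)\subset W(\CF L^1,L^\infty)$; hence Proposition \ref{prop2.4} gives
\[
\|g_{x_0}\|_{W(\CF L^1,L^\infty)}\lesssim\|\chi\|_{W(\CF L^1,L^\infty)}\,\|G_{x_0}\|_{W(\CF L^1,L^\infty)}\lesssim\|f\|_{W(\CF L^1,L^\infty)},
\]
which is the assertion, with $C$ depending only on $d$ and on the fixed function $\chi$.

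The algebraic steps (the identity $h_{x_0,t}=U_t(T_{-(1-t)x_0}f)$ and translation invariance of the amalgam norm) are routine. The main — and rather mild — obstacle is the commutation of the $t$-integration with the $W(\CF L^1,L^\infty)$ norm: since translation is not strongly continuous on this non-separable space, a Bochner-integral argument is uncomfortable, so I would instead carry out the commutation by hand, writing $\CF\big(\varphi(\cdot-z)G_{x_0}\big)=\int_0^1(1-t)\,\CF\big(\varphi(\cdot-z)h_{x_0,t}\big)\,dt$ (Fubini, legitimate by continuity of $f$ and rapid decay of $\varphi$) and then estimating the $L^1(d\xi)$ norm by Minkowski; the uniformity in $t\in(0,1]$ of the constant in Corollary \ref{cor2.3} is exactly what makes the resulting integral in $t$ converge.
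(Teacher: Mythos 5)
Your proof is correct and follows essentially the same route as the paper's: multiplication by the translated cutoff via Proposition \ref{prop2.4}, Minkowski's inequality to pull out the $t$-integral, translation invariance of the amalgam norm, and the uniform dilation bound of Corollary \ref{cor2.3}. The only difference is cosmetic (you apply the cutoff last rather than first, and you justify the norm--integral interchange more explicitly than the paper does).
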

\begin{proof}
Using Proposition \ref{prop2.4} and Corollary \ref{cor2.3} we have
\begin{align*}
    \| &g_{x_0}(x) \|_{W(\CF L^1, L^{\infty})} =  \left\| \chi(x-x_{0})\int_{0}^1(1-t)f(t(x-x_0)+ x_0)dt \right\|_{W(\CF L^1, L^{\infty})}\\
    &\lesssim \left\| \chi(x-x_{0})\right\|_{W(\CF L^1, L^{\infty})}\left\|\int_{0}^1(1-t)f(tx+(1-t) x_0)dt \right\|_{W(\CF L^1, L^{\infty})}\\
     &\leq  \left\| \chi(x-x_{0})\right\|_{W(\CF L^1, L^{\infty})}\int_{0}^1(1-t)\left\|f(tx+(1-t) x_0)\right\|_{W(\CF L^1, L^{\infty})}dt \\
     &=  \left\| \chi\right\|_{W(\CF L^1, L^{\infty})}\int_{0}^1(1-t)\left\|f(tx)\right\|_{W(\CF L^1, L^{\infty})}dt \\
     &\lesssim  \left\| \chi\right\|_{W(\CF L^1, L^{\infty})}\int_{0}^1(1-t)\left\|f\right\|_{W(\CF L^1, L^{\infty})}dt \\
    & \lesssim \left\|f\right\|_{W(\CF L^1, L^{\infty})}.
\end{align*}
\end{proof}
\begin{lemma}\label{lem:bounded_by_partial}
Assume that $B \subset\R^n$ is an open ball, $\mu \in C^2(\R^d)$ is real-valued and satisfies $\partial^{\g}\mu \in W(\CF L^1, L^{\infty})(\R^d)$ for all multi-indices $\g$ with $|\g|=2$ and that $f\in M^1(\R^d) \cap \CE'(B)$. Then $f e^{i \mu}\in M^1(\R^d)$ and for some constant $C$  which only depends on $d$ and the radius of the ball $B$ we have
$$
\|f e^{i \mu}\|_{M^1} \leq C \|f\|_{M^1} \exp \Big(C \sum_{|\g|=2}\|\partial^{\g} \mu \|_{W(\CF L^1, L^{\infty})}\Big).
$$
\end{lemma}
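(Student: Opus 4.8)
The plan is to exploit that $f$ has compact support in $B$, so only the values of $\mu$ on $B$ matter, and to replace $\mu$ there by its second-order Taylor remainder, whose exponential can then be handled by a power-series expansion. First I would reduce to the case $B=B_R(0)$: this is legitimate since $M^1$ and $W(\CF L^1,L^\infty)$ are invariant, with equal norms, under translations, and a translation of $\mu$ changes $e^{i\mu}$, $f$ and the $\partial^\g\mu$ only by a translation. Writing Taylor's formula with integral remainder at the origin, $\mu(\xi)=\mu(0)+\nabla\mu(0)\cdot\xi+r(\xi)$ with $r(\xi)=2\sum_{|\g|=2}\frac{\xi^\g}{\g!}\int_0^1(1-t)\partial^\g\mu(t\xi)\,dt$, and noting that multiplication by $e^{i(\mu(0)+\nabla\mu(0)\cdot\xi)}$ is a unimodular constant times the modulation $M_{\nabla\mu(0)/(2\pi)}$, hence an isometry of $M^1$, the problem becomes to estimate $\|f e^{ir}\|_{M^1}$. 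Choosing $\chi\in C_0^\infty(B_R(0))$ with $\chi\equiv 1$ on $\mathrm{supp}\,f$, I would rewrite $r$ on $\mathrm{supp}\,f$ as $\sum_{|\g|=2}\frac{2}{\g!}\xi^\g g_\g(\xi)$, where $g_\g(\xi)=\chi(\xi)\int_0^1(1-t)\partial^\g\mu(t\xi)\,dt$ is exactly the object controlled by the previous lemma (applied with $\partial^\g\mu$ in place of $f$ and $x_0=0$): thus $g_\g\in W(\CF L^1,L^\infty)$ with $\|g_\g\|_{W(\CF L^1,L^\infty)}\lesssim\|\partial^\g\mu\|_{W(\CF L^1,L^\infty)}$, the constant depending only on $d$ and $R$.

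Next I would record two algebraic facts, both consequences of Proposition \ref{prop2.4} once one observes that $W(\CF L^1,L^1)(\R^d)=M^1(\R^d)$ with equivalent norms (immediate from the definitions and Fubini, as both norms equal $\|V_gf\|_{L^1(\R^{2d})}$ for a suitable window): namely that $W(\CF L^1,L^\infty)$ is a Banach algebra under pointwise multiplication, and that $M^1$ is a module over it, $\|hw\|_{M^1}\lesssim\|h\|_{W(\CF L^1,L^\infty)}\|w\|_{M^1}$. Fixing a further cutoff $\chi_0\in C_0^\infty(B_R(0))$ with $\chi_0\equiv1$ on $\mathrm{supp}\,f$, and using that $\chi_0\,\xi^\g\in C_0^\infty\subset W(\CF L^1,L^\infty)$ with norm bounded in terms of $d$ and $R$, the algebra property gives $\|\chi_0 r\|_{W(\CF L^1,L^\infty)}\le C_1\sum_{|\g|=2}\|\partial^\g\mu\|_{W(\CF L^1,L^\infty)}=:C_1A$.

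Finally I would expand $f e^{ir}=\sum_{k\ge0}\frac{i^k}{k!}f r^k$ and prove by induction that $\|f r^k\|_{M^1}\le (C_2A)^k\|f\|_{M^1}$: the case $k=0$ is trivial, and for $k\ge1$ one writes $f r^k=(f r^{k-1})(\chi_0 r)$ — valid because $\chi_0\equiv1$ on $\mathrm{supp}\,f\supseteq\mathrm{supp}(f r^{k-1})$ — and applies the module estimate together with the bound on $\|\chi_0 r\|_{W(\CF L^1,L^\infty)}$, absorbing all constants into $C_2=C_2(d,R)$. The series then converges absolutely in $M^1$, and its sum is $f e^{ir}$ because $M^1\hookrightarrow L^\infty$ forces convergence in sup-norm, while on the compact set $\mathrm{supp}\,f$ the partial sums already converge pointwise (indeed uniformly) to $f e^{ir}$. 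Summing yields $\|f e^{i\mu}\|_{M^1}=\|f e^{ir}\|_{M^1}\le\|f\|_{M^1}\sum_{k\ge0}\frac{(C_2A)^k}{k!}=\|f\|_{M^1}e^{C_2A}$, which is the claimed inequality.

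The one genuinely non-routine point — and the reason for the detour — is that one cannot simply argue "$\chi_0 e^{i\mu}$ is $C^2$ with compact support, hence in $M^1$", because $C^2_c\not\subset M^1$ in dimension $d\ge4$. The cutoff-and-power-series device circumvents this: the quadratically growing phase $r$ (for which indeed $e^{ir}\notin W(\CF L^1,L^\infty)$ in general) is never used in bulk, only the localized, genuinely amalgam-regular pieces $\chi_0 r$ enter, and the factorial decay of the exponential's coefficients turns the geometric growth $(C_2A)^k$ into the stated exponential bound. The remaining care is minor bookkeeping: the identification $W(\CF L^1,L^1)=M^1$, and the verification that the $M^1$-limit of the series is the expected function.
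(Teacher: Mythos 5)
Your argument is correct and is essentially the paper's own proof: the same Taylor splitting into an affine phase (absorbed by the modulation invariance of $M^1$) plus a second-order remainder controlled through the preceding lemma, followed by the exponential power series together with the algebra and module properties coming from Proposition \ref{prop2.4} and the identification $M^1=W(\CF L^1,L^1)$; the paper merely bounds $\|e^{i\chi\psi_2}\|_{W(\CF L^1,L^\infty)}$ as a whole and multiplies by $f$ at the end, instead of your equivalent term-by-term induction on $\|fr^k\|_{M^1}$. The one detail to adjust is the choice of cutoffs: take $\chi_0\equiv1$ on $\overline{B_R(0)}$ with support in $B_{2R}(0)$, and $\chi\equiv1$ on $\mathrm{supp}\,\chi_0$ (rather than merely on $\mathrm{supp}\,f$), so that the identity $\chi_0 r=\sum_{|\g|=2}\frac{2}{\g!}(\chi_0\xi^{\g})g_{\g}$ actually holds and the constants depend only on $d$ and $R$, not on how close $\mathrm{supp}\,f$ comes to $\partial B$.
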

\begin{proof}
We may assume that $B$ is the unit ball which is centered at the origin. By Taylor expansion it follows that $\mu = \psi_1 + \psi_2$, where 
$$
\psi_1(x)= \mu(0)+ \langle \nabla \mu (0), x \rangle , \ \ \psi_2(x)= \sum_{|\g|=2}\frac{2}{\g !} \int_0^1 (1-t)\partial^{\g} \mu(t x)dt \, x^{\g}.
$$
Since modulations do not affect the modulation space norms we have $\|f e^{i \psi_1}\|_{M^1}= \|f \|_{M^1}$. Furthermore, if $\chi \in C_0^{\infty}(\R^d)$ satisfies $\chi (x)= 1$ on $B$, then it follows from the previous Lemma that, for some constant $C_1>0$,
$$\|\chi \psi_2\|_{W(\CF L^1, L^{\infty})}\leq C_1 \sum_{|\g|=2}\|\partial^{\g}\mu\|_{W(\CF L^1, L^{\infty})}.$$
Hence, by Proposition \ref{prop2.4}, for some $C_2\geq1$ we have
\begin{align*}
    \|e^{i \chi \psi_2}\|_{W(\CF L^1, L^{\infty})}&=  \left\|\sum_{n=0}^{\infty} \frac{(\chi \psi_2)^n}{n!}\right\|_{W(\CF L^1, L^{\infty})} 
    \leq \sum_{n=0}^{\infty} \frac{C_2^{n-1}}{n!}\|\chi \psi_2\|_{W(\CF L^1, L^{\infty})}^n \\
    &\leq \exp\Big(C_2\|\chi \psi_2\|_{W(\CF L^1, L^{\infty})}\Big) \\
&    \leq \exp\Big( C_1C_2 \sum_{|\g|=2}\|\partial^{\g}\mu\|_{W(\CF L^1, L^{\infty})} \Big).
\end{align*} 
Using $M^1=W(\CF L^1,L^\infty)$ and Proposition \ref{prop2.4} again, this gives
\begin{align*}
    \|f e^{i \mu}\|_{M^1} &= \|f e^{i \psi_1}e^{i \chi \psi_2}\|_{M^1} \lesssim\|f e^{i \psi_1}\|_{M^1} \|e^{i \chi \psi_2}\|_{W(\CF L^1, L^{\infty})} \\
   & \leq C \|f\|_{M^1} \exp\Big( C \sum_{|\g|=2}\|\partial^{\g}\mu\|_{W(\CF L^1, L^{\infty})} \Big).
\end{align*}
\end{proof}
\begin{proof}[Proof of Theorem \ref{lem:A_bound}]
Let us first show that $e^{i \mu (x)} \in W(\CF L^1, L^{\infty})$. We know that there exists $\chi \in C_0^{\infty}(\R^d)$ (cf.\ \cite{F2,F1}) such that
\[
    \|e^{i \mu (x)}\|_{W(\CF L^1, L^{\infty})} = \sup_{k \in \Z^d} \{ \|\chi(x-k)e^{i \mu(x)}\|_{\CF L^1} \} \asymp \sup_{k \in \Z^d}\{ \|\chi(x-k)e^{i \mu(x)}\|_{M^1}\} 
    \]
 where the last equivalence follows from that fact the for functions supported in a ball the $\CF L^1$ and $M^1$ norms are equivalent, with constants depending only on the radius of the ball. \par
 Hence, using Lemma \ref{lem:bounded_by_partial} we can continue our estimate as 
    \begin{align*}
    &\leq \sup_{k \in \Z^d} \Big\{C \|\chi(x-k)\|_{M^1} \exp\Big(C \sum_{|\g|=2}\|\partial^{\g}\mu\|_{W(\CF L^1, L^{\infty})} \Big) \Big\}\\
    &= C\|\chi\|_{M^1}  \exp\Big(C \sum_{|\g|=2}\|\partial^{\g}\mu\|_{W(\CF L^1, L^{\infty})} \Big).
\end{align*}

Hence $e^{i \mu (x)}\in W(\CF L^1, L^{\infty})$ and by Lemma \ref{lem:s_in_W_bounded} we deduce that $ e^{i \mu (D)}: M^{p,q} \to M^{p,q}$ is bounded, for every $1 \leq p, q \leq \infty$. 
\end{proof}

\section{Growth at infinity, strong oscillations}\label{sec4}
In this section we are going to prove Theorem \ref{caso_pq}. 

We begin with the following preliminary result.
\begin{lemma}\label{lem:partial_mu_W}
Let $\mu(\xi)$ be a real-valued $C^2$ function, satisfying 
 \begin{equation}\label{eq:cond_mu}
    \langle \xi \rangle^{2-\a} \partial^{\g} \mu \in W(\CF L^1, L^{\infty})(\R^d) \ \text{for } |\g|=2.
\end{equation}
Then 
\begin{itemize}
    \item[(i)]$ \langle \xi \rangle^{-\a}  \mu \in W(\CF L^1, L^{\infty})(\R^d)$,\\
    \item[(ii)] $ \langle \xi \rangle^{1-\a} \partial^{\g} \mu \in W(\CF L^1, L^{\infty})(\R^d) \ \text{for } |\g|=1.$
\end{itemize}
\end{lemma}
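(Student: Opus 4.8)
The plan is to reduce both statements to a single uniform local estimate. Fix once and for all a window $\chi\in C_0^\infty(\R^d)$ centred at the origin; then $\langle\cdot\rangle^{-\a}\mu$ (resp.\ $\langle\cdot\rangle^{1-\a}\partial^\g\mu$, $|\g|=1$) belongs to $W(\CF L^1,L^\infty)$ as soon as the $\CF L^1$ norms of $\chi(\cdot-\xi_0)\langle\cdot\rangle^{-\a}\mu$ (resp.\ of $\chi(\cdot-\xi_0)\langle\cdot\rangle^{1-\a}\partial^\g\mu$) are bounded uniformly in $\xi_0\in\R^d$. On each ball $B(\xi_0)$ of fixed radius I would Taylor-expand about the centre $\xi_0$ — $\mu$ to second order and $\partial^\g\mu$, $|\g|=1$, to first order, both with integral remainders — exactly as in the proof of Lemma~\ref{lem:bounded_by_partial}, e.g.\ $\mu(\xi)=\mu(\xi_0)+\nabla\mu(\xi_0)\cdot(\xi-\xi_0)+\sum_{|\g|=2}\frac{2}{\g!}(\xi-\xi_0)^\g\int_0^1(1-t)\partial^\g\mu(\xi_0+t(\xi-\xi_0))\,dt$.

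Two preliminary facts are needed. First, from \eqref{eq:cond_mu} and $W(\CF L^1,L^\infty)\subset L^\infty$ one has $|\partial^\g\mu(\xi)|\lesssim\langle\xi\rangle^{\a-2}$ for $|\g|=2$; integrating along rays from the origin and using $\a\geq2$ gives $|\partial^\g\mu(\xi)|\lesssim\langle\xi\rangle^{\a-1}$ for $|\g|=1$ and $|\mu(\xi)|\lesssim\langle\xi\rangle^{\a}$. Second, for every $s\in\R$ and every $\phi\in C_0^\infty(\R^d)$ one has $\|\phi(\cdot-\xi_0)\langle\cdot\rangle^{s}\|_{\CF L^1}\lesssim\langle\xi_0\rangle^{s}$ uniformly in $\xi_0$: write $\langle\cdot\rangle^s=\langle\xi_0\rangle^s(\langle\cdot\rangle/\langle\xi_0\rangle)^s$ and note that on $B(\xi_0)$ the factor $(\langle\cdot\rangle/\langle\xi_0\rangle)^s$ has all derivatives bounded uniformly in $\xi_0$, so $\phi(\cdot-\xi_0)(\langle\cdot\rangle/\langle\xi_0\rangle)^s$ is smooth, with fixed support size and $C^N$-norms uniform in $\xi_0$, hence uniformly bounded in $\CF L^1$. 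With these, the constant and linear parts of the Taylor expansion, multiplied by $\chi(\cdot-\xi_0)\langle\cdot\rangle^{-\a}$ (resp.\ $\chi(\cdot-\xi_0)\langle\cdot\rangle^{1-\a}$), contribute $\lesssim|\mu(\xi_0)|\langle\xi_0\rangle^{-\a}+|\nabla\mu(\xi_0)|\langle\xi_0\rangle^{-\a}\lesssim1$ (resp.\ $\lesssim|\partial^\g\mu(\xi_0)|\langle\xi_0\rangle^{1-\a}\lesssim1$), uniformly in $\xi_0$.

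The remainder terms carry the real content, and here I would reproduce the argument of the first Lemma of Section~\ref{sec3}. Put $f_\g:=\langle\cdot\rangle^{2-\a}\partial^\g\mu\in W(\CF L^1,L^\infty)$ for $|\g|=2$, and fix auxiliary cutoffs $\chi_1,\widetilde\chi_1\in C_0^\infty(\R^d)$ centred at the origin with $\chi_1\equiv1$ on $\operatorname{supp}\chi$ and $\widetilde\chi_1\equiv1$ on $\operatorname{supp}\chi_1$. Since $\xi_0+t(\xi-\xi_0)\in B(\xi_0)$ for $\xi\in B(\xi_0)$ and $t\in[0,1]$, inserting $\widetilde\chi_1(\cdot-\xi_0)$ inside the remainder does not change it on $\operatorname{supp}\chi(\cdot-\xi_0)$; the resulting factor $\chi_1(\cdot-\xi_0)\int_0^1(1-t)[\widetilde\chi_1(\cdot-\xi_0)\partial^\g\mu](\xi_0+t(\cdot-\xi_0))\,dt$ is exactly of the form handled by that Lemma, with window $\chi_1$ and $f=\widetilde\chi_1(\cdot-\xi_0)\partial^\g\mu\in W(\CF L^1,L^\infty)$, so its $W(\CF L^1,L^\infty)$ norm is $\lesssim\|\widetilde\chi_1(\cdot-\xi_0)\langle\cdot\rangle^{\a-2}f_\g\|_{W(\CF L^1,L^\infty)}\lesssim\langle\xi_0\rangle^{\a-2}\|f_\g\|_{W(\CF L^1,L^\infty)}$ by Proposition~\ref{prop2.4} together with the weight bound above. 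Multiplying back by $\chi(\cdot-\xi_0)(\cdot-\xi_0)^\g\langle\cdot\rangle^{-\a}$ (of $\CF L^1$ norm $\lesssim\langle\xi_0\rangle^{-\a}$) and applying Proposition~\ref{prop2.4} once more, each remainder contributes $\lesssim\langle\xi_0\rangle^{-2}\|f_\g\|_{W(\CF L^1,L^\infty)}$, uniformly in $\xi_0$; for (ii) the identical computation with $\langle\cdot\rangle^{-\a}$ replaced by $\langle\cdot\rangle^{1-\a}$ and with first-order remainders gives a contribution $\lesssim\langle\xi_0\rangle^{-1}\|f_\g\|_{W(\CF L^1,L^\infty)}$. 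Summing over $\g$ and taking the supremum over $\xi_0$ yields (i) and (ii). (Throughout, every function being estimated is supported in a ball of fixed radius about $\xi_0$, so one may freely pass between the $\CF L^1$ and $W(\CF L^1,L^\infty)$ norms with constants depending only on that radius, as in the proof of Theorem~\ref{lem:A_bound}; the use of Corollary~\ref{cor2.3} is hidden inside the Section~\ref{sec3} lemma.)

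The step I expect to be the main obstacle is the careful freezing of the weight $\langle\xi\rangle^{s}$: it must be replaced by $\langle\xi_0\rangle^{s}$ not only in the outer factor but also inside the integral remainder, where it is evaluated at the moving point $\xi_0+t(\xi-\xi_0)$. The insertion of the auxiliary cutoff $\widetilde\chi_1(\cdot-\xi_0)$ is precisely what makes the frozen remainder fall into the scope of the Section~\ref{sec3} lemma with an $f$ lying in $W(\CF L^1,L^\infty)$; everything else is a routine application of Proposition~\ref{prop2.4} and the elementary pointwise growth bounds.
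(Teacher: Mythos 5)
Your argument is correct, but it follows a genuinely different route from the paper's. The paper performs a single \emph{global} Taylor expansion of $\mu$ (resp.\ of $\partial^{\g}\mu$, $|\g|=1$) at the origin, disposes of the polynomial part by noting that $\langle\xi\rangle^{-\a}$ and $\xi_j\langle\xi\rangle^{-\a}$ belong to $M^{\infty,1}\subset W(\CF L^1,L^\infty)$ since $\a\geq 2$, and controls the integral remainder by inserting $\langle t\xi\rangle^{2-\a}\langle t\xi\rangle^{\a-2}$, pulling out $\langle t\xi\rangle^{\a-2}\lesssim\langle t\rangle^{\a-2}\langle\xi\rangle^{\a-2}$, and then combining Minkowski's inequality with Corollary \ref{cor2.3} (applied to the dilation by $t\in(0,1]$) and Proposition \ref{prop2.4}. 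You instead localize at every window centre $\xi_0$, Taylor-expand there, control the polynomial part through the pointwise growth bounds $|\mu|\lesssim\langle\cdot\rangle^{\a}$, $|\nabla\mu|\lesssim\langle\cdot\rangle^{\a-1}$ (which you must derive first, by integrating the hypothesis along rays -- a step the paper avoids entirely by expanding only at the origin) together with the frozen-weight estimate $\|\phi(\cdot-\xi_0)\langle\cdot\rangle^{s}\|_{\CF L^1}\lesssim\langle\xi_0\rangle^{s}$, and reduce the remainder to the first (unnamed) lemma of Section \ref{sec3} after inserting the auxiliary cutoff $\widetilde\chi_1(\cdot-\xi_0)$. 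Both proofs ultimately rest on the same two ingredients (Proposition \ref{prop2.4} and Corollary \ref{cor2.3}), but your localized version keeps every estimate at the level of compactly supported functions, where the $\CF L^1$ and $W(\CF L^1,L^\infty)$ norms are interchangeable and all pointwise manipulations of the weights are legitimate; the paper's global version is shorter but requires replacing $\langle t\xi\rangle^{\a-2}$ by $\langle t\rangle^{\a-2}\langle\xi\rangle^{\a-2}$ inside a $W(\CF L^1,L^\infty)$ norm, which is not a pointwise-monotone quantity and so needs the factorization to be done at the level of functions before taking norms. One small point to make explicit: in part (ii) the first-order remainder is $\int_0^1 f(t(\xi-\xi_0)+\xi_0)\,dt$ without the factor $(1-t)$, so you are invoking the Section \ref{sec3} lemma with a different bounded weight in the $t$-integral; its proof (Minkowski plus Corollary \ref{cor2.3}) goes through verbatim, but this should be said.
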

\begin{proof}
To prove (i), consider a Taylor expansion
$$
\mu(\xi)= \mu (0) + \langle \nabla\mu (0), \xi \rangle + \sum_{|\g|=2}\frac{2}{\g !} \int_0^1 (1-t)\partial^{\g} \mu(t \xi)dt \,\xi^{\g}.
$$
Hence 
\begin{multline}\label{eq:taylor_mu}
    \langle \xi \rangle^{-\a} \mu(\xi)= \mu (0)\langle \xi \rangle^{-\a}  + \langle \nabla\mu (0), \xi \rangle \langle \xi \rangle^{-\a} \\
    + \sum_{|\g|=2}\frac{2}{\g !} \int_0^1 (1-t)\partial^{\g} \mu(t \xi)dt \, \xi^{\g}\langle \xi \rangle^{-\a} 
\end{multline}
where \[
\mu (0)\langle \xi \rangle^{-\a}\in W(\CF L^1, L^{\infty})(\R^d),\quad \langle \nabla\mu (0), \xi \rangle \langle \xi \rangle^{-\a} \in W(\CF L^1, L^{\infty})(\R^d),
\]
because $\a \geq 2$. Here we used that fact that the functions $\langle \xi \rangle^{-\a}$ and $\xi_j \langle \xi \rangle^{-\a}$ are bounded together their derivatives of every order, so that they belong to $M^{\infty,1}(\R^d)$ (\cite[Theorem 14.5.3]{grochenig}) and hence to $W(\CF L^1, L^{\infty})(\R^d)$ as well.\par
 Let us show that the last summation in \eqref{eq:taylor_mu} belongs to $W(\CF L^1, L^{\infty})(\R^d)$ too. We have 
 \begin{align*}
&\left\|\int_0^1 (1-t)\partial^{\g} \mu(t \xi)dt \xi^{\g}\langle \xi \rangle^{-\a} \right\|_{ W(\CF L^1, L^{\infty})}\\
&=\left \|\int_0^1 (1-t)\partial^{\g} \mu(t \xi) \langle t\xi \rangle^{2-\a} \langle t\xi \rangle^{-2+\a} dt\, \xi^{\g}\langle \xi \rangle^{-\a} \right \|_{ W(\CF L^1, L^{\infty})} \\ 
&\lesssim \left \|\int_0^1 (1-t)\langle t \rangle^{-2+\a}\partial^{\g} \mu(t \xi) \langle t\xi \rangle^{2-\a} dt\, \xi^{\g}\langle \xi \rangle^{-\a}  \langle \xi \rangle^{-2+\a} \right \|_{ W(\CF L^1, L^{\infty})}. 
\end{align*}
Using Proposition \ref{prop2.4} and Corollary \ref{cor2.3} we can continue the above estimate as
\begin{align*} 
&\lesssim \int_0^1 (1-t)\langle t \rangle^{-2+\a}\left \|\partial^{\g} \mu(t \xi) \langle t\xi \rangle^{2-\a} \right \|_{ W(\CF L^1, L^{\infty})} dt \left \| \xi^{\g}\langle \xi \rangle^{-2} \right \|_{ W(\CF L^1, L^{\infty})}\\
&\lesssim \int_0^1 (1-t)\langle t \rangle^{-2+\a}dt \left \|\partial^{\g} \mu( \xi) \langle \xi \rangle^{2-\a} \right \|_{ W(\CF L^1, L^{\infty})}  \left \| \xi^{\g}\langle \xi \rangle^{-2} \right \|_{ W(\CF L^1, L^{\infty})} \\
&\lesssim \left \|\partial^{\g} \mu( \xi) \langle \xi \rangle^{2-\a} \right \|_{ W(\CF L^1, L^{\infty})}  \left \| \xi^{\g}\langle \xi \rangle^{-2} \right \|_{ W(\CF L^1, L^{\infty})}.
\end{align*}
This concludes the proof of (i) because, arguing as above, we have $\xi^{\g}\langle \xi \rangle^{-2} \in M^{\infty,1}\subset W(\CF L^1, L^{\infty})$, whereas $\partial^{\g} \mu( \xi) \langle \xi \rangle^{2-\a} \in  W(\CF L^1, L^{\infty})$ by assumption.\par
To prove (ii), consider the Taylor expansion of $\partial^{\g}\mu$, for $|\g|=1$
$$
\partial^{\g}\mu(\xi)= \partial^{\g}\mu (0) + \sum_{|\b|=1} \int_0^1 \partial^{\g+ \b} \mu(t \xi)dt\, \xi^{\b},
$$
so that
$$
\langle \xi \rangle^{1-\a} \partial^{\g}\mu(\xi)= \partial^{\g}\mu (0)\langle \xi \rangle^{1-\a}  + \sum_{|\b|=1} \int_0^1 \partial^{\g+ \b} \mu(t \xi)dt\,\xi^{\b}\langle \xi \rangle^{1-\a}.
$$
Now $ \partial^{\g}\mu (0)\langle \xi \rangle^{1-\a} \in W(\CF L^1, L^{\infty})$, because $\a\geq 2$, and arguing as above
\begin{align*}
&\left\|\int_0^1 \partial^{\g+ \b} \mu(t \xi)dt\, \xi^{\b}\langle \xi \rangle^{1-\a} \right\|_{ W(\CF L^1, L^{\infty})}\\
&= \left \|\int_0^1 \partial^{\g+ \b} \mu(t \xi) \langle t\xi \rangle^{2-\a} \langle t\xi \rangle^{-2+\a} dt\, \xi^{\b}\langle \xi \rangle^{1-\a} \right \|_{ W(\CF L^1, L^{\infty})} \\ 
&\lesssim \left \|\partial^{\g+\b} \mu( \xi) \langle \xi \rangle^{2-\a} \right \|_{ W(\CF L^1, L^{\infty})}  \left \| \xi^{\b}\langle \xi \rangle^{-1} \right \|_{ W(\CF L^1, L^{\infty})} ,
\end{align*}
where $\xi^{\b}\langle \xi \rangle^{-1} \in M^{\infty,1}(\R^d)\subset W(\CF L^1, L^{\infty})(\R^d)$ because $|\beta|=1$, and moreover  $\partial^{\g + \b} \mu( \xi) \langle \xi \rangle^{2-\a} \in  W(\CF L^1, L^{\infty})(\R^d)$ by assumption, because $\g + \b=2$.
\end{proof}

We observe that, by complex interpolation of weighted modulation spaces, namely Proposition \ref{interp}, it suffices to prove the  conclusion of Theorem \ref{caso_pq} when $(p,q)$ is one of the four vertices of the
interpolation square,
$(1,1)$, $(1,\infty)$,
$(\infty,1)$,
$(\infty,\infty)$, with
$\delta=d(\alpha-2)/2$,
as well as for the points
$(2,1)$, $(2,\infty)$ with
$\delta=0$. To this end, we
reduce matters to the case of
unweighted modulation spaces
by means of the following
easy lemma.
\begin{lemma}\label{lem1f}
A multiplier $\sigma(D)$ is
bounded from
$\cM^{p,q}_{\delta}(\R^d)$ to
$\cM^{p,q}(\R^d)$ if and only if
the multiplier
$\sigma(D)\langle D
\rangle^{-\delta}$ is bounded
on $\cM^{p,q}(\R^d)$.
\end{lemma}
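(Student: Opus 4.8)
The plan is to reduce the asserted equivalence to the single fact that the Bessel potential $\langle D\rangle^{\delta}$ is an isomorphism of $\cM^{p,q}_{\delta}(\R^d)$ onto $\cM^{p,q}(\R^d)$, with inverse $\langle D\rangle^{-\delta}$. More generally, it is well known (see e.g.\ \cite{grochenig,Feichtinger-grochenig89}) that for every $s,\sigma\in\R$ and every $1\le p,q\le\infty$ the Fourier multiplier $\langle D\rangle^{s}$ maps $\cM^{p,q}_{\sigma}(\R^d)$ continuously into $\cM^{p,q}_{\sigma-s}(\R^d)$, and that these maps are mutually inverse (as Fourier multipliers on $\mathcal{S}'(\R^d)$, $\langle D\rangle^{s}\langle D\rangle^{-s}=\mathrm{Id}$). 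Granting this, the lemma follows at once, since Fourier multipliers commute: if $\sigma(D)\colon\cM^{p,q}_{\delta}\to \cM^{p,q}$ is bounded, then $\sigma(D)\langle D\rangle^{-\delta}=\sigma(D)\circ\langle D\rangle^{-\delta}$ is bounded as a composition $\cM^{p,q}\to \cM^{p,q}_{\delta}\to \cM^{p,q}$; conversely, if $\sigma(D)\langle D\rangle^{-\delta}\colon\cM^{p,q}\to \cM^{p,q}$ is bounded, then writing $\sigma(D)=\big(\sigma(D)\langle D\rangle^{-\delta}\big)\circ\langle D\rangle^{\delta}$ exhibits $\sigma(D)$ as a composition $\cM^{p,q}_{\delta}\to \cM^{p,q}\to \cM^{p,q}$.

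Thus the only real content is the isomorphism property of $\langle D\rangle^{\delta}$, and I would record a short self-contained argument for it. First I would move to the STFT side by means of the identity $V_{g}f(x,\xi)=e^{-2\pi i x\xi}\,V_{\hat g}\hat f(\xi,-x)$, so that estimating $\langle D\rangle^{\delta}$ amounts to dominating $V_{\hat g}(\langle\cdot\rangle^{\delta}\hat f)$ by $\langle\xi\rangle^{\delta}\,V_{\hat g}\hat f$ in the mixed-norm space. Next I would factor $\langle\eta\rangle^{\delta}=\langle\xi\rangle^{\delta}\,(\langle\eta\rangle/\langle\xi\rangle)^{\delta}$ inside the STFT integral and invoke Peetre's inequality $(\langle\eta\rangle/\langle\xi\rangle)^{\delta}\lesssim\langle\eta-\xi\rangle^{|\delta|}$ to absorb the extra factor into the window, replacing $\hat g$ by the Schwartz function $\eta\mapsto\langle\eta\rangle^{|\delta|}|\hat g(\eta)|$; this yields a pointwise bound $|V_{g}(\langle D\rangle^{\delta}f)(x,\xi)|\lesssim\langle\xi\rangle^{\delta}\,\big(G\ast|V_{g_{1}}f(x,\cdot)|\big)(\xi)$ for a suitable $G\in\mathcal{S}(\R^d)$ and a suitable window $g_{1}$. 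Finally, taking mixed $L^{p,q}$ norms and using Young's inequality in the $\xi$ variable gives $\|\langle D\rangle^{\delta}f\|_{\cM^{p,q}}\lesssim\|f\|_{\cM^{p,q}_{\delta}}$; the same estimate applied to $\langle D\rangle^{-\delta}$ with $f$ replaced by $\langle D\rangle^{\delta}f$ gives the reverse inequality, hence the isomorphism.

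I do not expect any genuine obstacle here: the computation is routine. The only points deserving a little care are that the argument must go through at the endpoints $p,q\in\{1,\infty\}$ — which it does, as the STFT estimate above requires neither duality nor density — and that the change of window in the middle step be performed with a single window $g_{1}$ independent of $\xi$, which is exactly what the Peetre-inequality factorization guarantees.
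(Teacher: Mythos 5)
Your first paragraph reproduces the paper's proof exactly: the lemma is an immediate consequence of the fact that $\langle D\rangle^{t}$ is an isomorphism $\cM^{p,q}_{s}(\R^d)\to \cM^{p,q}_{s-t}(\R^d)$ for all $s,t$, and the composition/commutation argument you give is the same one. The paper, however, does not reprove this isomorphism; it quotes it from Toft (\cite{Toftweight}, Theorem 2.2 and Corollary 2.3). Up to that point there is nothing to object to.

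The self-contained argument you then sketch for the isomorphism has a genuine gap at the key pointwise estimate. On the Fourier side,
\[
V_{g}(\langle D\rangle^{\delta}f)(x,\xi)=\int_{\R^d}\langle\eta\rangle^{\delta}\,\hat f(\eta)\,e^{2\pi i x(\eta-\xi)}\,\overline{\hat g(\eta-\xi)}\,d\eta,
\]
and the entire dependence on $x$ sits in the oscillatory factor $e^{2\pi i x\eta}$. Absorbing the Peetre factor ``into the window'' by replacing $\hat g$ with $\eta\mapsto\langle\eta\rangle^{|\delta|}|\hat g(\eta)|$ means taking absolute values inside the integral, and what that actually yields is
\[
|V_{g}(\langle D\rangle^{\delta}f)(x,\xi)|\;\le\;\langle\xi\rangle^{\delta}\int_{\R^d}|\hat f(\eta)|\,G(\xi-\eta)\,d\eta\;=\;\langle\xi\rangle^{\delta}\,\bigl(G\ast|\hat f|\bigr)(\xi),
\]
which is independent of $x$ and involves $|\hat f|$, not $|V_{g_1}f(x,\cdot)|$. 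This is not the inequality you claim, and it is useless for $p<\infty$: the right-hand side is constant in $x$, so its $L^p_x$ norm is infinite. The bound you want, with $|V_{g_1}f(x,\cdot)|$ on the right evaluated at the \emph{same} $x$, cannot be obtained by discarding the phase in the $\eta$-integral. The standard repair is to keep the oscillation and view the Peetre factor as producing a $\xi$-dependent family of windows $\gamma_\xi$ with $\hat\gamma_\xi(\zeta)=(\langle\zeta+\xi\rangle/\langle\xi\rangle)^{\delta}\hat g(\zeta)$, which Peetre's inequality shows is bounded in $\mathcal{S}(\R^d)$ uniformly in $\xi$; the change-of-window inequality $|V_{\gamma}h|\le\|g_0\|_2^{-2}\,|V_{g_0}h|\ast|V_{\gamma}g_0|$ then gives a domination by a convolution in \emph{both} variables, $|V_g(\langle D\rangle^{\delta}f)(x,\xi)|\lesssim\langle\xi\rangle^{\delta}\,(|V_{g_0}f|\ast H)(x,\xi)$ with a fixed integrable $H$ on $\R^{2d}$, after which Young's inequality in $L^{p,q}$ concludes. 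Alternatively, do as the paper does and simply cite the isomorphism from \cite{Toftweight}.
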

\begin{proof}
We know e.g.\ from
\cite[Theorem 2.2, Corollary
2.3]{Toftweight} that
$\langle D\rangle^{t}$
defines an isomorphism
$\cM^{p,q}_s(\R^d)\to
\cM^{p,q}_{s-t}(\R^d)$ for every
$s,t\in\mathbb{R}$, so that
the conclusion is immediate.
\end{proof}

Therefore we may work with the operator
$$Tf(x)=\int_{\R^d} e^{2\pi ix\xi}e^{i\mu(\xi)}\langle\xi
\rangle^{-\delta}\hat{f}(\xi)d\xi.$$
We have to prove that $T$ is
 bounded on
$M^{1,1}$,
$M^{1,\infty}$,
$\cM^{\infty,1}$,
$\cM^{\infty,\infty}$ for
$\delta=\frac{d(\alpha-2)}{2}$,
and on $M^{2,1}$ and
$\cM^{2,\infty}$ for
$\delta=0$.\par\bigskip
\noindent \textbf{Boundedness
on $M^{1,1}$ and
$\cM^{\infty,1}$
 for $\delta=\frac{d(\alpha-2)}{2}$}.
We will need the following
lemma (cf.
\cite{cordero-nicola-rodino,sugimoto-tomita}).
\begin{lemma}\label{sum_1}
Let $\chi$ be a smooth
function supported where
$B_0^{-1}\leq |\xi| \leq B_0$
for some $B_0>0$. Then, for
$1\leq p\leq\infty$,
$$ \sum_{j=1}^{\infty} \|\chi(2^{-j}D)f\|_{M^{p,1}}\leq C\|f\|_{M^{p,1}}.$$
\end{lemma}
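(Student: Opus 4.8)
The plan is to reduce the statement to the standard frequency-uniform decomposition description of $M^{p,1}$ and then to exploit that the dyadic annuli $A_j:=\{\xi:\ B_0^{-1}2^j\le|\xi|\le B_0 2^j\}$, on which the symbols $\chi(2^{-j}\xi)$ are supported, have bounded overlap. Concretely, I would fix once and for all a smooth partition of unity $\{\sigma_n\}_{n\in\Z^d}$ subordinate to the unit cubes, i.e.\ $\sum_{n\in\Z^d}\sigma_n\equiv1$, $\mathrm{supp}\,\sigma_n\subset n+[-1,1]^d$, and $\sup_n\|\partial^\g\sigma_n\|_{L^\infty}<\infty$ for every multi-index $\g$. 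It is well known (see \cite{sugimoto-tomita}, and also \cite{cordero-nicola-rodino}) that for $1\le p\le\infty$ one has the norm equivalence $\|f\|_{M^{p,1}}\asymp\sum_{n\in\Z^d}\|\sigma_n(D)f\|_{L^p}$. Applying this to $\chi(2^{-j}D)f$ and noting that $\sigma_n(D)\chi(2^{-j}D)f$ vanishes unless $(n+[-1,1]^d)\cap A_j\ne\emptyset$, matters reduce to estimating $\sum_{j\ge1}\sum_{n\in\Lambda_j}\|\sigma_n(D)\chi(2^{-j}D)f\|_{L^p}$, where $\Lambda_j:=\{n\in\Z^d:\ (n+[-1,1]^d)\cap A_j\ne\emptyset\}$.

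Next I would estimate each term $\|\sigma_n(D)\chi(2^{-j}D)f\|_{L^p}$ uniformly in $n$ and $j$. Pick a fixed-profile cut-off $\widetilde\sigma_n$, equal to $1$ on $\mathrm{supp}\,\sigma_n$, supported in $n+[-2,2]^d$, with all derivatives bounded uniformly in $n$; then $\sigma_n(D)\chi(2^{-j}D)f=\big(\sigma_n\,\chi(2^{-j}\cdot)\big)(D)\,\widetilde\sigma_n(D)f$. Since $\partial^\g[\chi(2^{-j}\xi)]=2^{-j|\g|}(\partial^\g\chi)(2^{-j}\xi)$, the symbol $\sigma_n(\xi)\chi(2^{-j}\xi)$ is supported in the translate $n+[-1,1]^d$ of a fixed cube and satisfies $\|\partial^\g(\sigma_n\chi(2^{-j}\cdot))\|_{L^\infty}\le C_\g$ with $C_\g$ independent of $n$ and of $j\ge0$; a standard integration by parts then shows that $\CF^{-1}(\sigma_n\chi(2^{-j}\cdot))$ has $L^1$-norm bounded by a constant independent of $n$ and $j$, so by Young's inequality $\|\sigma_n(D)\chi(2^{-j}D)f\|_{L^p}\lesssim\|\widetilde\sigma_n(D)f\|_{L^p}$ for every $1\le p\le\infty$. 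Writing $\widetilde\sigma_n(D)f=\widetilde\sigma_n(D)\sum_{|m-n|_\infty\le 3}\sigma_m(D)f$ and using the same kernel bound for $\widetilde\sigma_n(D)$, this yields $\|\sigma_n(D)\chi(2^{-j}D)f\|_{L^p}\lesssim\sum_{|m-n|_\infty\le3}\|\sigma_m(D)f\|_{L^p}$, uniformly in $n$ and $j$.

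Finally, summing over $n\in\Lambda_j$ and over $j\ge1$ and interchanging the order of summation gives
\[
\sum_{j=1}^{\infty}\|\chi(2^{-j}D)f\|_{M^{p,1}}\lesssim\sum_{m\in\Z^d}N(m)\,\|\sigma_m(D)f\|_{L^p},\qquad N(m):=\#\{j\ge1:\ \mathrm{dist}(m+[-1,1]^d,A_j)\le C_d\},
\]
for a suitable dimensional constant $C_d$. The point is that $N(m)$ is bounded by a constant $N$ depending only on $B_0$ and $d$: the condition $\mathrm{dist}(m+[-1,1]^d,A_j)\le C_d$ forces $2^j$ to lie in an interval of the form $[c\langle m\rangle,\,C\langle m\rangle]$ with $c,C$ depending only on $B_0,d$ (and, for $m$ near the origin, simply $2^j\lesssim B_0$), so the number of admissible $j$ is at most $\log_2(C/c)+O(1)=:N$. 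Hence $\sum_{j\ge1}\|\chi(2^{-j}D)f\|_{M^{p,1}}\lesssim N\sum_{m\in\Z^d}\|\sigma_m(D)f\|_{L^p}\asymp\|f\|_{M^{p,1}}$, which is the assertion. The only step that is not routine bookkeeping is the uniform-in-$(n,j)$ bound on the $L^1$-norm of the convolution kernel of $\sigma_n(D)\chi(2^{-j}D)$ in the second paragraph; there it is essential that this symbol is supported in a translate of one fixed compact set and that rescaling $\chi$ by $2^{-j}\le1$ only \emph{shrinks} its derivatives, so that no loss builds up as $j\to\infty$.
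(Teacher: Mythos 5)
Your proposal is correct and follows essentially the same route as the paper: both use the frequency-uniform decomposition characterization $\|f\|_{M^{p,1}}\asymp\sum_{n}\|\sigma_n(D)f\|_{L^p}$, the bounded overlap between the dyadic annuli $\mathrm{supp}\,\chi(2^{-j}\cdot)$ and the unit cubes, and the uniform $L^p$-boundedness of the multipliers $\chi(2^{-j}D)$ (which the paper simply asserts and you justify by the uniform $L^1$ kernel bound). The extra bookkeeping you carry out (the cut-offs $\widetilde\sigma_n$ and the multiplicity count $N(m)$) is sound but does not change the substance of the argument.
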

\begin{proof}
We will use the following
characterization of the
$M^{p,q}$ norm
\cite{triebel83}: let
$\varphi \in
C^{\infty}_0(\R^d)$ such that
$\varphi(\xi)\geq 0$,
$\sum_{m\in\Z^d} \varphi
(\xi-m)=1$, $\forall \xi\in
\mathbb{R}^d$. Then
$$\|f\|_{M^{p,q}}\asymp \Big(\sum_{m\in\Z^d} \|\varphi
(D-m)f\|^q_{L^p}\Big)^{1/q}.$$
Hence it turns out
\begin{align*}
\sum_{j=1}^{\infty}\|\chi(2^{-j}D)f\|_{M^{p,1}}&\asymp
\sum_{j=1}^{\infty}\sum_{m\in\Z^d}
\|\varphi
(D-m)\chi(2^{-j}D)f\|_{L^p}
\\
&= \sum_{m\in\Z^d}
\sum_{j=1}^{\infty}\|\varphi
(D-m)\chi(2^{-j}D)f\|_{L^p}\\
& =\sum_{m\in\Z^d}
\sum_{j=1}^{\infty}\|\chi(2^{-j}D)\varphi
(D-m)f\|_{L^p}.
\end{align*}
Now, the number of indices
$j\geq1$ for which ${\rm
supp}\,\chi(2^{-j}\cdot)\cap
{\rm supp}\,\varphi
(\cdot-m)\neq 0$ is finite
for every $m$, and even
uniformly bounded with
respect to $m$. Hence the
last expression is
$$\lesssim\sum_{m\in\Z^d} \sup_{j\geq1}\|\chi(2^{-j}D)\varphi
(D-m)f\|_{L^p}.$$ Since the
operators $\chi(2^{-j}D)$ are
uniformly bounded on $L^p$ we
can continue the estimate as
$$\lesssim\sum_{m\in\Z^d} \|\varphi (D-m)f\|_{L^p}\asymp \|f\|_{M^{p,1}}.$$
\end{proof}

Consider now a
Littlewood-Paley
decomposition of the
frequency domain. Namely,
 fix a smooth function $\psi_0$
  such that $\psi_0(\xi)=1$
  for $|\xi|\leq1$ and
  $\psi_0(\xi)=0$ for
  $|\xi|\geq2$.
Set
$\psi(\xi)=\psi_0(\xi)-\psi_0(2\xi)$.
Then $\psi_j(\xi):=\psi(2^{-j}\xi)$ for
$j\geq1$ is supported where
$2^{j-1}\leq|\xi| \leq 2^{j+1}$. We can write
\begin{equation}\label{decom1}
T=T^{(0)}+\sum_{j=1}^{\infty}
T^{(j)} \end{equation}
 where
$T^{(j)}$ is the Fourier
multiplier with symbol
$\sigma_j(\xi):=
e^{i\mu(\xi)}\psi_j(\xi)
\langle\xi
\rangle^{-\delta}$, $j\geq0$.\par
Now,  $T^{(0)}$ is
bounded on
$\cM^{p,q}$ for every $1\leq
p,q\leq\infty$ as a consequence of Lemma \ref{lem:s_in_W_bounded}, because $\sigma_0\in M^1\subset W(FL^1,L^\infty)$ by Lemma \ref{lem:bounded_by_partial}.\par
Consider now the above
sum over $j\geq1$. \par
 Let
$$\lambda_j =2^{-\frac{\alpha-2}{2}j},$$
and consider the operators
$\tilde{T}^{(j)}$ defined by
\begin{equation}\label{decom2}
T^{(j)}=U_{\lambda_j}\tilde{T}^{(j)}U_{\lambda_j^{-1}},
\end{equation}
where $U_\lambda
f(x)=f(\lambda x)$,
$\lambda>0$, is the dilation
operator.
 In other terms,
\[
    \tilde{T}^{(j)}f(x)= \int_{\R^d}e^{2\pi i x \xi}e^{i \mu (\l_j \xi)}\psi_j(\l_j \xi)\langle \l_j \xi\rangle^{-\delta}\hat{f}(\xi)d \xi.
    \]
    
 Let $\chi_j(\chi):= \chi(2^{-j}\xi)$ with $\chi\in C^\infty_0(\R^d)$ supported where $\frac{1}{4}\leq |\xi| \leq 4$ and $\chi(\xi)= 1$ on the support of $\psi$, so that $\chi_j(\xi)=1$ on the support of $\psi_j$. 
 We can therefore write
  \[
   \tilde{T}^{(j)}f(x) = \int_{\R^d}e^{2\pi i x \xi}e^{i \chi_j(\l_j \xi) \mu (\l_j \xi)}\psi_j(\l_j \xi)\langle \l_j \xi\rangle^{-\delta}\hat{f}(\xi)d \xi,  
\] 
hence 
\begin{equation}\label{add0}
\tilde{T}^{(j)}= A_j B_j,
\end{equation}
 where
$$
A_j= e^{i (\chi_j \mu) (\l_j D)}, \ \ \ \ B_j= \psi_j(\l_j D)\langle \l_j D\rangle^{-\delta}.
$$

 Taking into account that on the support of $\psi_j(\l_j \xi)$ we have $\l_j|\xi| \asymp 2^j$ and $\delta=d(\alpha-2)/2$, the following estimate is easily verified:
\[
|\partial^\gamma(\psi_j(\lambda_j\xi)
\langle \lambda_j\xi
\rangle^{-\delta})|\lesssim
2^{-\frac{d(\alpha-2)}{2}j},\quad
\forall\gamma\in\mathbb{Z}^d_+.
\]

Then, by the classical boundedness results of pseudodifferential operators on modulation spaces (see e.g.\ \cite[Theorems 14.5.2, 14.5.2]{grochenig}) 
 we have 
 \begin{equation}\label{add1}
  \|B_j\|_{M^{p,q} \to M^{p,q}} \lesssim 2^{-\frac{d(\alpha-2)}{2}j},
  \end{equation}
   for every $1\leq p,q \leq \infty$.

Let us now prove that 
\begin{equation}\label{add2}
\|A_j\|_{M^{p,q} \to M^{p,q}} \lesssim 1,
\end{equation}
 for all $j \geq 1$ and for every $1\leq p,q \leq \infty$.

Using Theorem \ref{lem:A_bound} it is sufficient to check that 
$$
\|\partial^{\g} [\chi_j(\l_j \xi)\mu(\l_j \xi)]\|_{W(\CF L^1, L^{\infty})}\lesssim 1, $$for $|\g|=2$ and all $j \geq 1$ (we are in fact using the fact that the operator norm of the multiplier in Theorem \ref{lem:A_bound} is bounded when $ \partial^{\g} \mu$, $|\gamma|=2$, belong to a bounded subset of $W(\CF L^1,L^\infty)$).\par 
For $|\g|=2$,  we have
$$\partial^{\g} [\chi_j(\l_j \xi)\mu(\l_j \xi)]= \l_j^2 \partial^{\g} [\chi_j\mu](\l_j \xi),$$
and by Leibniz' formula it is enough to prove that 
\begin{eqnarray}
\l_j^2 \|(\partial^{\g}\chi_j)\mu\|_{W(\CF L^1, L^{\infty})}\lesssim 1 & |\g|=2 \label{eq:partial_chi}\\
\l_j^2 \|\partial^{\g}\chi_j \partial^{\b}\mu\|_{W(\CF L^1, L^{\infty})}\lesssim 1 & \ \qquad |\g|=|\b|=1 \label{eq:partial_chi_mu}\\
\l_j^2 \|\chi_j \partial^{\g}\mu\|_{W(\CF L^1, L^{\infty})}\lesssim 1 & |\g|=2 \label{eq:partial_mu}.
\end{eqnarray}
First, let us prove \eqref{eq:partial_chi}. Using Lemma \ref{lem:partial_mu_W} (i), Proposition \ref{prop2.4} and the embeddings $C^{d+1}(\R^d)\hookrightarrow M^{\infty,1}(\R^d)\hookrightarrow W(\CF L^1,L^\infty)(\R^d)$ (\cite[Theorem 14.5.3]{grochenig}) we can estimate
\begin{align*}
    \l_j^2 \|(\partial^{\g}\chi_j) \mu\|_{W(\CF L^1, L^{\infty})}&\lesssim \l_j^2 \|\langle \xi \rangle^{-\a} \mu\|_{W(\CF L^1, L^{\infty})} \|\langle \xi \rangle^{\a} \partial^{\g}\chi_j \|_{W(\CF L^1, L^{\infty})}\\
    &\lesssim \l_j^2 \sum_{\b \leq d+1}\|\partial^{\b}[\langle \xi \rangle^{\a} \partial^{\g}\chi_j]\|_{L^{\infty}}.
\end{align*}
On the other hand,
\begin{align*}
   | \partial^{\b}[\langle \xi \rangle^{\a} \partial^{\g}\chi_j(\xi)]|&= \left|\sum_{\nu \leq \b} {{\b}\choose{\nu}}\partial^{\nu} \langle \xi \rangle^{\a} \partial^{\g+ \b-\nu}\chi_j(\xi) \right| \\
 &  \lesssim \sum_{\nu \leq \b} {{\b}\choose{\nu}} \langle \xi \rangle^{\a-|\nu|} 2^{-j|\g+ \b-\nu|}\left|(\partial^{\g+ \b-\nu}\chi)(2^{-j}\xi) \right|\\
   & \lesssim \sum_{\nu \leq \b} {{\b}\choose{\nu}}2^{(\a-|\nu|) j} 2^{-2j} \lesssim 2^{j(\a -2)},
\end{align*}
because on the support of $\chi_j$, $|\xi| \asymp 2^j$ and $|\g+ \b-\nu|\geq 2$. Thus
\[
    \l_j^2 \|(\partial^{\g}\chi_j) \mu\|_{W(\CF L^1, L^{\infty})}
    \lesssim \l_j^2 2^{j(\a -2)} =1.
\]

Now, let us prove \eqref{eq:partial_chi_mu}, using Lemma \ref{lem:partial_mu_W} (ii) and arguing as above we write
\begin{align*}
    \l_j^2 \|\partial^{\g}\chi_j \partial^{\b}\mu\|_{W(\CF L^1, L^{\infty})}
    &\lesssim \l_j^2 \|\langle \xi \rangle^{1-\a} \partial^{\b}\mu\|_{W(\CF L^1, L^{\infty})} \|\langle \xi \rangle^{\a-1} \partial^{\g}\chi_j \|_{W(\CF L^1, L^{\infty})}\\
    &\lesssim \l_j^2 \sum_{\b \leq d+1}\|\partial^{\b}[\langle \xi \rangle^{\a-1} \partial^{\g}\chi_j]\|_{L^{\infty}}.
\end{align*}
On the other hand,
\begin{align*}
   | \partial^{\b}[\langle \xi \rangle^{\a-1} \partial^{\g}\chi_j(\xi)]|&
   \lesssim \sum_{\nu \leq \b} {{\b}\choose{\nu}} \langle \xi \rangle^{\a-1-|\nu|} 2^{-j|\g+ \b-\nu|}\left|(\partial^{\g+ \b-\nu}\chi)(2^{-j}\xi) \right|\\
   & \lesssim \sum_{\nu \leq \b} {{\b}\choose{\nu}}2^{(\a-1-|\nu|) j} 2^{-j} \lesssim 2^{j(\a -2)},
\end{align*}
because now $|\g+ \b-\nu|\geq 1$. Thus
\[
    \l_j^2 \|\partial^{\g}\chi_j \partial^\beta \mu\|_{W(\CF L^1, L^{\infty})}
    \lesssim \l_j^2 2^{j(\a -2)}=1.
\]
Finally, let's us prove \eqref{eq:partial_mu}, using the hypothesis $\langle \xi \rangle^{2-\a} \partial^{\g}\mu(\xi) \in W(\CF L^1, L^{\infty})(\R^d)$, $|\g|=2$, we have
\begin{align*}
    \l_j^2 \| \chi_j \partial^{\g}\mu\|_{W(\CF L^1, L^{\infty})}
    &\lesssim \l_j^2 \|\langle \xi \rangle^{2-\a} \partial^{\g}\mu\|_{W(\CF L^1, L^{\infty})} \|\langle \xi \rangle^{\a-2} \chi_j \|_{W(\CF L^1, L^{\infty})}\\
    &\lesssim \l_j^2 \sum_{\b \leq d+1}\|\partial^{\b}[\langle \xi \rangle^{\a-2} \chi_j]\|_{L^{\infty}}.
\end{align*}
Moreover, arguing as above
\begin{align*}
   | \partial^{\b}[\langle \xi \rangle^{\a-2}\chi_j(\xi)]|&
   \lesssim \sum_{\nu \leq \b} {{\b}\choose{\nu}} \langle \xi \rangle^{\a-2-|\nu|} 2^{-j| \b-\nu|}\left|(\partial^{\b-\nu}\chi)(2^{-j}\xi) \right|\\
   & \lesssim \sum_{\nu \leq \b} {{\b}\choose{\nu}}2^{(\a-2-|\nu|) j} 2^{0} \lesssim 2^{j(\a -2)}.
\end{align*}
 Thus
\[
    \l_j^2 \|\chi_j \partial^{\g} \mu\|_{W(\CF L^1, L^{\infty})}\lesssim \l_j^2 2^{j(\a -2)} =1.
\]
Hence, by \eqref{add0}, \eqref{add1} and \eqref{add2} we have
\begin{equation}\label{eq:Tj_bound}
    \|\tilde{T}^{(j)}f\|_{M^{p,q}}= \|A_j B_jf\|_{M^{p,q}}\lesssim
2^{-\frac{d(\alpha-2)}{2}j}\|f\|_{M^{p,q}},
\end{equation} for every $1\leq p,q \leq \infty$.\\

We now combine this estimate with those for the dilation
operator, given in Theorem
\ref{dilprop}. For
$p=1,\infty$ and $q=1$ they
read
\[
\|U_{\lambda_j}f\|_{M^{1,1}}\lesssim
2^{\frac{d(\alpha-2)}{2}j}
\|f\|_{M^{1,1}},
\]
\[
\|U_{\lambda_j}f\|_{M^{\infty,1}}\lesssim
\|f\|_{M^{\infty,1}},
\]
and
\[
\|U_{\lambda_j^{-1}}f\|_{M^{1,1}}\lesssim
\|f\|_{M^{1,1}},
\]
\[
\|U_{\lambda_j^{-1}}f\|_{M^{\infty,1}}\lesssim
2^{\frac{d(\alpha-2)}{2}j}
\|f\|_{M^{\infty,1}}.
\]
Therefore we obtain, for
$p=1,\infty$,
$$\|T^{(j)}f\|_{M^{p,1}}
\lesssim
2^{-\frac{d(\alpha-2)}{2}j}2^{\frac{d(\alpha-2)}{2}j}
\|f\|_{M^{p,1}}=\|f\|_{M^{p,1}}.$$

Finally, to sum over $j\geq1$ these
last estimates we take advantage of the
fact we are working with functions
which are localized in shells of the
frequency domain. Precisely, let $\chi$ as before, namely a smooth function satisfying
$\chi(\xi)=1$ for $1/2\leq |\xi|\leq2$
and $\chi(\xi)=0$ for $|\xi|\leq1/4$
and $|\xi|\geq4$ (so that
$\chi\psi=\psi$). With
$\chi_j(\xi)=\chi(2^{-j}\xi)$ and $p=1,\infty$ we have

$$\|T^{(j)}f\|_{M^{p,1}}=\|T^{(j)}(\chi(2^{-j}D)f)\|_{M^{p,1}}\lesssim\|\chi(2^{-j}D)f\|_{M^{p,1}},$$
so that Lemma \ref{sum_1}
gives us
\[
\left\|\sum_{j\geq1}
T^{(j)}f \right\|_{M^{p,1}}\leq
\sum_{j\geq1}\|T^{(j)}f\|_{M^{p,1}}\lesssim
\|f\|_{M^{p,1}}.
\]
\medskip
\textbf{Boundedness on
$M^{1,\infty}$ and
$M^{\infty,\infty}$
 for $\delta=\frac{d(\alpha-2)}{2}$.} We first establish the
following lemma (cf.\ 
\cite{cordero-nicola-rodino,sugimoto-tomita}).
\begin{lemma}\label{sum_infinito}
For $k\geq0$, let
$f_k\in \mathcal{S}(\R^d)$ satisfy
${\rm supp}\,\hat{f}_0\subset
B_2(0)$ and
\[
{\rm supp}\,\hat{f}_k\subset\{\xi\in\R^d:\
2^{k-1}\leq|\xi|\leq
2^{k+1}\},\quad k\geq1.
\]
Then, if the sequence $f_k$
is bounded in
$M^{p,\infty}(\R^d)$ for some
$1\leq p\leq\infty$, the
series $\sum_{k=0}^\infty
{f}_k$ converges in
${M}^{p,\infty}(\R^d)$ and
\begin{equation}\label{b0}
\left\|\sum_{k=0}^\infty
f_k \right\|_{M^{p,\infty}}\lesssim\sup_{k\geq0}\|f_k\|_{M^{p,\infty}}.
\end{equation}
 \end{lemma}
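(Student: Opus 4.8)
The plan is to follow the scheme of the proof of Lemma \ref{sum_1}, now in the $\ell^\infty$ regime on the frequency side. Fix $\varphi\in C_0^\infty(\R^d)$ with $\varphi\geq 0$ and $\sum_{m\in\Z^d}\varphi(\xi-m)=1$ for every $\xi$, so that, exactly as in the proof of Lemma \ref{sum_1},
\[
\|f\|_{M^{p,\infty}}\asymp \sup_{m\in\Z^d}\|\varphi(D-m)f\|_{L^p}.
\]
Writing $S_N=\sum_{k=0}^N f_k$, the core of the argument is to bound $\sup_m\|\varphi(D-m)S_N\|_{L^p}$ by $\sup_k\|f_k\|_{M^{p,\infty}}$, uniformly in $N$; the convergence of the series will then follow by a soft argument.

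Everything hinges on a finite--overlap property. Since ${\rm supp}\,\varphi$ is a fixed compact set, $m+{\rm supp}\,\varphi$ is contained in a ball centred at $m$ of radius $C_d$, with $C_d$ depending only on $d$, whereas ${\rm supp}\,\widehat{f_k}$ is contained in the dyadic annulus $\{2^{k-1}\leq|\xi|\leq 2^{k+1}\}$ for $k\geq 1$, and in $B_2(0)$ for $k=0$. Hence $\varphi(D-m)f_k\equiv 0$ unless that ball meets the corresponding annulus, and a direct estimate with the logarithms of the radii shows that, for each fixed $m$, the set $E_m=\{k\geq 0:\ \varphi(D-m)f_k\not\equiv 0\}$ has cardinality bounded by a constant $N_0=N_0(d)$, the value $k=0$ occurring only for $|m|\lesssim 1$. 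Consequently, for every $m$ and every $N$,
\[
\|\varphi(D-m)S_N\|_{L^p}\leq\sum_{k\in E_m}\|\varphi(D-m)f_k\|_{L^p}\leq N_0\,\sup_{k\geq 0}\ \sup_{m'\in\Z^d}\|\varphi(D-m')f_k\|_{L^p}\lesssim \sup_{k\geq 0}\|f_k\|_{M^{p,\infty}},
\]
and taking the supremum over $m$ gives $\sup_N\|S_N\|_{M^{p,\infty}}\lesssim\sup_k\|f_k\|_{M^{p,\infty}}$. This step is entirely parallel to Lemma \ref{sum_1}, with the sum over $j$ there replaced by a supremum over $m$ here.

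Finally, for the convergence, decompose a test function $\phi\in\mathcal{S}(\R^d)$ along a Littlewood--Paley partition of the frequencies; since ${\rm supp}\,\widehat{f_k}$ meets the support of the $j$-th piece only for $|j-k|\leq 1$, and $M^{p,\infty}\hookrightarrow M^{\infty,\infty}=(M^{1,1})^*$, one obtains $|\langle f_k,\phi\rangle|\lesssim \|f_k\|_{M^{p,\infty}}\,2^{-kN}\|\phi\|_{M^{1,1}_N}$ for every $N\geq 0$, using that $\phi$ lies in the weighted space $M^{1,1}_N$ for all $N$. Thus $\sum_k\langle f_k,\phi\rangle$ is absolutely convergent and $S_N\to S$ in $\mathcal{S}'(\R^d)$ for some $S$. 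Since $\varphi(D-m)S_N\to\varphi(D-m)S$ in $\mathcal{S}'$ while staying bounded in $L^p$ by the previous paragraph, and each $\varphi(D-m)S$ is band--limited, we get $\|\varphi(D-m)S\|_{L^p}\leq\liminf_N\|\varphi(D-m)S_N\|_{L^p}$, and taking the supremum over $m$ yields \eqref{b0} with $S=\sum_k f_k$. The only point needing a little care is this last lower semicontinuity when $p=1$ (and $q=\infty$): there one uses the band--limitation of $\varphi(D-m)S$ to upgrade the $\mathcal{S}'$-limit from an a priori finite measure to an $L^1$ function with the desired norm control. Apart from that, the proof is the same routine almost--orthogonality bookkeeping as in Lemma \ref{sum_1}.
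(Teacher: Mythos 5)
Your proof is correct, and it reaches the estimate by a route that is parallel to, but technically different from, the paper's. The paper works directly with the continuous STFT: it picks a window $g$ with ${\rm supp}\,\hat g\subset B_{1/2}(0)$, observes that $V_gf_k(\cdot,\xi)=(\hat f_k\ast M_{-x}\hat{\overline g})(\xi)$ is supported in $\xi$ in the slightly enlarged annulus $2^{k-2}\leq|\xi|\leq 2^{k+2}$, so that for each fixed $\xi$ at most four terms of $\sum_k\|V_gf_k(\cdot,\xi)\|_{L^p}$ are nonzero, and concludes in three lines. You instead discretize via the frequency-uniform characterization $\|f\|_{M^{p,\infty}}\asymp\sup_{m\in\Z^d}\|\varphi(D-m)f\|_{L^p}$ and run the finite-overlap count over $m\in\Z^d$ rather than over $\xi\in\R^d$; the combinatorics (a ball of fixed radius meets boundedly many dyadic annuli) is the same, and your constant $N_0(d)$ plays the role of the paper's ``four''. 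What your version buys is a cleaner treatment of the convergence issue: the paper dismisses convergence of $\sum_k f_k$ in $M^{p,\infty}$ as ``straightforward'', but in fact norm convergence can fail when $\|f_k\|_{M^{p,\infty}}\not\to0$ (the tails $\sum_{k>M}f_k$ need not be small in norm), so the correct reading is convergence in $\mathcal S'$ (equivalently weak-$*$) to an element of $M^{p,\infty}$ satisfying \eqref{b0} --- which is exactly what your last paragraph establishes, and is all that is used later in the proof of Theorem \ref{caso_pq}. One small simplification you could make there: for each fixed $m$ the sequence $\varphi(D-m)S_N$ is \emph{eventually constant} in $N$ (only the finitely many $k\in E_m$ contribute), so no lower-semicontinuity or $p=1$ measure-versus-$L^1$ discussion is needed at all.
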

 \begin{proof}
The convergence of the series
$\sum_{k=0}^\infty {f}_k$ in
${M}^{p,\infty}(\R^d)$ is straightforward.
We now prove the desired
estimate.\par
 Choose a window
function $g$ with ${\rm
supp}\,\hat{g}\subset
B_{1/2}(0)$. We can write
\[
V_g(f_k)(x,\xi)=(\hat{f}_k\ast
M_{-x}\hat{\overline{g}})(\xi).
\]
Hence, ${\rm supp}\,V_g(f_0)\subset
B_{5/2}(0)\subset
B_{2^2}(0)$, and
\begin{align*}
{\rm supp}\,V_g({f}_k)&\subset\{(x,\xi)\in\R^{2d}:\
2^{k-1}-2^{-1}\leq|\xi|\leq
2^{k+1}+2^{-1}\}\\
&\subset\{(x,\xi)\in\R^{2d}:\
2^{k-2}\leq|\xi|\leq
2^{k+2}\},
\end{align*}
for $k\geq1$. Hence, for each
$\xi$, there are at most four
nonzero terms in the sum
$\sum_{k=0}^\infty
\|V_g(f_k)(\cdot,\xi)\|_{L^p}$.
Using this fact we obtain
\begin{align}
\|\sum_{k=0}^\infty
{f}_k\|_{M^{p,\infty}}&\asymp\|\sum_{k=0}^\infty
V_g({f}_k)\|_{L^{p,\infty}}\leq
\sup_{\xi\in\R^d}\sum_{k=0}^\infty
\|V_g(f_k)(\cdot,\xi)\|_{L^p}\nonumber\\
&\leq
4\sup_{k\geq0}\sup_{\xi\in\R^d}
\|V_g(f_k)(\cdot,\xi)\|_{L^p}=4\sup_{k\geq0}
\|V_g(f_k)\|_{L^{p,\infty}}\nonumber\\
&\asymp\sup_{k\geq0}
\|f_k\|_{M^{p,\infty}}.\nonumber
\end{align}
\end{proof}

We now consider the same
decomposition as above,
namely \eqref{decom1}, and
the operators
$\tilde{T}^{(j)}$ in
\eqref{decom2}, $j\geq1$. From \eqref{eq:Tj_bound} for $q= \infty$ we have the following estimate:
$$\|\tilde{T}^{(j)}f\|_{M^{p,\infty}}\leq
2^{-\frac{d(\alpha-2)}{2}j}\|f\|_{M^{p,\infty}}.$$
We then combine this estimate
with those for the dilation
operator which here read
\[
\|U_{\lambda_j}f\|_{M^{1,\infty}}\lesssim
2^{d(\alpha-2)j}
\|f\|_{M^{1,\infty}},
\]
\[
\|U_{\lambda_j}f\|_{M^{\infty,\infty}}\lesssim
2^{\frac{d(\alpha-2)}{2}j}\|f\|_{M^{\infty,\infty}},
\]
and
\[
\|U_{\lambda_j^{-1}}f\|_{M^{1,\infty}}\lesssim
2^{-\frac{d(\alpha-2)}{2}j}
\|f\|_{M^{1,\infty}},
\]
\[
\|U_{\lambda_j^{-1}}f\|_{M^{\infty,\infty}}\lesssim
\|f\|_{M^{\infty,\infty}}.
\]
Therefore we obtain, for
$p=1,\infty$,
$$\|T^{(j)}f\|_{M^{p,\infty}}
\lesssim
2^{-\frac{d(\alpha-2)}{2}j}2^{\frac{d(\alpha-2)}{2}j}
\|f\|_{M^{p,\infty}}=\|f\|_{M^{p,\infty}}.$$
We finally conclude by
applying Lemma
\ref{sum_infinito}: for
$p=1,\infty$,
\[
\left\|\sum_{j=1}^\infty
T^{(j)}f \right\|_{M^{p,\infty}}
\lesssim
\sup_{j\geq1}\|T^{(j)}f\|_{M^{p,\infty}}
\lesssim\|f\|_{M^{p,\infty}}.
\]
\textbf{Boundedness on
$M^{2,1}$ and
$\cM^{2,\infty}$ for
$\delta=0$}. Indeed, we will
prove boundedness on
$\cM^{2,q}$ for every $1\leq
q\leq\infty$ and $\delta=0$.
This is a special case of the
following result.
\begin{proposition}
Any Fourier multiplier $T$ with
symbol $\sigma\in L^\infty$
is bounded on $\cM^{2,q}$ for
every $1\leq q\leq\infty$.
\end{proposition}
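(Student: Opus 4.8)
The plan is to mimic the proof of Lemma \ref{lem:s_in_W_bounded}, the only difference being that an $L^\infty$ symbol need not belong to $W(\CF L^1,L^\infty)$, so that Proposition \ref{prop2.4} is no longer available; for the exponent $p=2$, however, it can be replaced by the elementary fact that $\CF L^2=L^2$ isometrically (Plancherel), and multiplication by an $L^\infty$ function is obviously bounded on $L^2$. Write $T=\CF^{-1}\circ A_\sigma\circ\CF$, where $A_\sigma u(\xi)=\sigma(\xi)u(\xi)$, and recall that $\CF$ is an isomorphism $M^{2,q}(\R^d)\to W(\CF L^2,L^q)(\R^d)$. It thus suffices to show that $A_\sigma$ is bounded on $W(\CF L^2,L^q)(\R^d)$, with norm controlled by $\|\sigma\|_{L^\infty}$.

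To this end fix a window $g\in\mathcal{S}(\R^d)\setminus\{0\}$. For $u\in W(\CF L^2,L^q)(\R^d)$ and a.e.\ $x\in\R^d$ the function $g(\cdot-x)u$ lies in $L^2(\R^d)$, and since $\sigma\in L^\infty$,
\[
\|g(\cdot-x)\,\sigma u\|_{\CF L^2}=\|g(\cdot-x)\,\sigma u\|_{L^2}\leq\|\sigma\|_{L^\infty}\|g(\cdot-x)u\|_{L^2}=\|\sigma\|_{L^\infty}\|g(\cdot-x)u\|_{\CF L^2},
\]
where the first and last equalities are Plancherel's theorem. Taking the $L^q$-norm in $x$ (the supremum if $q=\infty$) gives
\[
\|A_\sigma u\|_{W(\CF L^2,L^q)}=\|\sigma u\|_{W(\CF L^2,L^q)}\leq\|\sigma\|_{L^\infty}\|u\|_{W(\CF L^2,L^q)},
\]
so $A_\sigma$ is bounded on $W(\CF L^2,L^q)(\R^d)$ for every $1\leq q\leq\infty$; conjugating by $\CF$ yields $\|T\|_{M^{2,q}\to M^{2,q}}\lesssim\|\sigma\|_{L^\infty}$.

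An equivalent route argues directly on the modulation-space side via the frequency-uniform decomposition used in the proof of Lemma \ref{sum_1}: $T$ commutes with every operator $\varphi(D-m)$, and $T$ is bounded on $L^2(\R^d)$ by Plancherel, so $\|\varphi(D-m)Tf\|_{L^2}\leq\|\sigma\|_{L^\infty}\|\varphi(D-m)f\|_{L^2}$ for all $m\in\Z^d$; taking the $\ell^q(\Z^d)$-norm over $m$ and using $\|h\|_{M^{2,q}}\asymp(\sum_{m\in\Z^d}\|\varphi(D-m)h\|_{L^2}^q)^{1/q}$ gives the claim at once. There is no genuine obstacle here: the statement amounts to the observation that, along the frequency lattice, $M^{2,q}$ is an $\ell^q$-valued $L^2$ space on which bounded Fourier multipliers act coordinatewise with uniformly bounded norm. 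The only point deserving a word is that $Tf$ is a well-defined tempered distribution when $f\in M^{2,q}(\R^d)$, which is clear since $\hat f\in W(\CF L^2,L^q)=W(L^2,L^q)$ is locally square integrable, whence $\sigma\hat f\in W(L^2,L^q)\subset\mathcal{S}'(\R^d)$.
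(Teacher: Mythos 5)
Your proof is correct and is essentially the paper's own argument: in both cases the whole point is that Plancherel identifies the $\CF L^2$ (equivalently, the $L^2_x$-norm of the STFT at fixed frequency) with a plain $L^2$ norm on the frequency side, where multiplication by $\sigma\in L^\infty$ is trivially bounded; the paper carries this out by a direct Parseval computation on $V_g(Tf)$, which is exactly your $W(\CF L^2,L^q)$ estimate written out by hand. Your second route via the frequency-uniform decomposition is an equivalent repackaging of the same observation.
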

\begin{proof}
This follows by a direct
computation. Namely
\begin{align*}
\|Tf\|_{M^{2,q}}&=\|\|M_x
\hat{\overline{g}}\ast
 \left(\sigma\hat{f}\right)\|_{L^2_x}\|_{L^q}\\
 &=\|\|\int e^{2\pi
ix(\xi-y)}\hat{\overline{g}}(\xi-y)\sigma(y)
\hat{f}(y)dy\|_{L^2_x}\|_{L^q_{\xi}}\\
   &=\|\|\sigma
\hat{f}T_\xi\hat{\overline{g}}\|_{L^2}\|_{L^q_{\xi}},
\end{align*}
where we used Parseval's
formula. In particular, this
computation with
$\sigma\equiv1$ gives
$\|f\|_{M^{2,q}}=\|\|
\hat{f}T_\xi\hat{g}\|_{L^2}\|_{L^q_{\xi}}$,
so we deduce at once the
desired estimate
$$\|Tf\|_{M^{2,q}}\lesssim
\|\sigma\|_{L^\infty}\|f\|_{M^{2,q}}.$$
\end{proof}
\section*{Acknowledgments}
We would like to thank Elena Cordero for inspiring discussions on the subject of this paper. 
The second author was partially supported by the  Generalitat Valenciana (Project VALi+d Pre Orden 64/2014 and Orden 86/2016)



\begin{thebibliography}{10}

\bibitem{benyi} A. B\'enyi, K. Gr\"ochenig,
 K.A. Okoudjou, L.G. Rogers. Unimodular Fourier multipliers for modulation spaces. {\it J. Funct. Anal.}, 246 (2007), no. 2, 366-384. 

\bibitem{benyi3} A. B\'enyi, K.A. Okoudjou. Local well-posedness of nonlinear dispersive equations on modulation spaces. {\it Bull. Lond. Math. Soc.}, 41 (2009), no. 3, 549-558.


\bibitem{boulk} A. Boulkhemair. Remarks on a Wiener type pseudodifferential algebra and Fourier integral operators. {\it Math. Res. Lett.}, 4 (1997), no. 1, 53-67.
 
 \bibitem{bib9} J. Chen, D. Fan, L.  Sun. Asymptotic estimates for unimodular Fourier multipliers on modulation spaces. {\it Discrete Contin. Dyn. Syst.},  32  (2012),  no. 2, 467-485.
 
 \bibitem{bib8} J. Chen, D. Fan, L. Sun, C. Zhang.
 Estimates for unimodular multipliers on modulation Hardy spaces. {\it J. Funct. Spaces Appl.}  2013, Art. ID 982753, 16 pp.
 
 \bibitem{concetti-toft}
F. Concetti, J. Toft. Trace ideals for Fourier integral operators with non-smooth symbols. {\it Pseudo-differential
operators: partial differential equations and time-frequency analysis}, 255-264, Fields Inst. Commun., 52, Amer. Math.
Soc. (2007).

\bibitem{Concetti_2009_Schatten}
F. Concetti, J. Toft. Schatten-von Neumann properties for Fourier integral operators with non-smooth symbols. I.
{\it Arkiv f\"or Matematik}, 47 (2009), no. 2,
295-312.

\bibitem{Cordero_2008_Metaplectic}
E.~Cordero, F. Nicola. Metaplectic representation on Wiener amalgam spaces and applications to the Schr\"odinger equation.
{\it J. Funct. Anal.}, 254 (2008), no. 2, 506-534.

\bibitem{CN2009} E.~Cordero, F.~Nicola. Remarks on Fourier multipliers and applications to the wave equation. {\it J. Math. Anal. Appl.}, 353 (2009), no. 2, 583-591.

\bibitem{CNR2009} E.~Cordero, F.~Nicola, L.~Rodino.
Sparsity of Gabor representation of Schrödinger propagators.  {\it Appl. Comput. Harmon. Anal.}, 26 (2009), no. 3, 357-370.

\bibitem{cordero-nicola-rodino} E.~Cordero, F.~Nicola, L.~Rodino. Boundedness of Fourier integral operators on $\CF L^p$ spaces. {\it Trans. Amer. Math. Soc.}, 361 (2009), no. 11, 6049-6071.

\bibitem{CNR2015}
E.~Cordero, F. Nicola, L. Rodino. Schrödinger equations with rough Hamiltonians. {\it Discrete Contin. Dyn. Syst.}, 35 (2015), no. 10, 4805-4821. 




\bibitem{bib5}  J. Cunanan, M.  Sugimoto. Unimodular Fourier multipliers on Wiener amalgam spaces. {\it J. Math. Anal. Appl.},  419  (2014),  no. 2, 738-747.

\bibitem{bib7}  Q. Deng, Y. Ding. L. Sun. Estimate for generalized unimodular multipliers on modulation spaces. 
 {\it Nonlinear Anal.},  85  (2013), 78-92.
 
\bibitem{F2} H.~G.~Feichtinger. Banach spaces of distributions of Wiener's type and interpolation. {\it Functional analysis and approximation} (Oberwolfach, 1980), pp. 153-165.

\bibitem{F1} H.~G.~Feichtinger. Modulation spaces on locally
compact abelian groups. {\it Technical Report, University Vienna,} (1983) and also in {\it Wavelets and Their Applications,} (2003) M. Krishna, R. Radha, S. Thangavelu,
editors, Allied Publishers, 99-140.

\bibitem{Feichtinger-grochenig89}
H. G. Feichtinger, K. Gr\"ochenig. Banach spaces related to integrable group representations and their atomic decompositions, II. {\it Monatsh. Math.}, 108 (1989), no. 2-3, 129-148. 
		

\bibitem{grochenig} K. Gr\"ochenig. Foundation of Time-Frequency Analysis. {\it Birkh\"auser} (2001) Boston MA.

\bibitem{bib0}  W. Guo, H. Wu, G. Zhao. Inclusion relations between modulation and Triebel-Lizorkin spaces. {\it Proc. Amer. Math. Soc.},  145  (2017),  no. 11, 4807-4820.
		
\bibitem{bib6}   K. Kato, M. Kobayashi, S. Ito. Estimates on modulation spaces for Schrödinger evolution operators with quadratic and sub-quadratic potentials. {\it J. Funct. Anal.},  266  (2014),  no. 2, 733-753.


\bibitem{MNRTT} A. Miyachi, F. Nicola, S. Rivetti, A. Tabacco,  N. Tomita. Estimates for unimodular Fourier multipliers on modulation spaces. {\it Proc. Amer. Math. Soc.}, 137 (2009), no. 11, 3869-3883. 
 
\bibitem{bib3} C. Song. Unimodular Fourier multipliers with a time parameter on modulation spaces. {\it J. Inequal. Appl.},  2014, 2014:43, 15 pp.
		
\bibitem{stein93}
E. M. Stein.  Harmonic analysis: real-variable methods, orthogonality, and oscillatory integrals. Princeton Mathematical Series, 43. Monographs in Harmonic Analysis, III. {\it Princeton University Press}, Princeton, NJ (1993), xiv+695 pp.
\bibitem{sugimoto-tomita} M. Sugimoto, N. Tomita. The dilation property of modulation spaces and their
 inclusion relation with Besov spaces, \textit{J. Funct. Anal.} 248 (2007), no. 1, 79-106. 
 
\bibitem{TCG} J. Toft, F. Concetti, G. Garello. Schatten-von Neumann properties for Fourier integral operators with non-smooth symbols II. {\it Osaka J. Math.}, 47 (2010), no. 3, 739-786.

\bibitem{Toftweight}
J.~Toft. Continuity properties for modulation spaces, with applications to pseudo-differential calculus. {II}.
{\it Ann. Global Anal. Geom.}, 26 (2004), no. 1, 73-106.

\bibitem{triebel83}
H. Triebel. Modulation spaces on the {E}uclidean $n$-spaces.
 {\it Z. Anal. Anwendungen}, 2 (1983), no. 5, 443-457. 
 
 \bibitem{baoxiang3} 
 B. Wang, C. Huang. Frequency-uniform
decomposition method for the generalized BO, KdV and NLS
equations. {\it J. Differential Equations}, 239 (2007), no. 1, 213-250. 

 \bibitem{bib1}  G. Zhao,  J. Chen, D. Fan, W.  Guo.  Sharp estimates of unimodular multipliers on frequency decomposition spaces. {\it Nonlinear Anal.},  142  (2016), 26-47.
 
 \bibitem{bib2}  G. Zhao, J. Chen, D. Fan, W. Guo. Unimodular Fourier multipliers on homogeneous Besov spaces. {\it J. Math. Anal. Appl.},  425  (2015),  no. 1, 536-547.

\bibitem{bib4}  G. Zhao, J. Chen, W. Guo. Remarks on the unimodular Fourier multipliers on $\alpha$-modulation spaces. {\it J. Funct. Spaces},  2014, Art. ID 106267, 8 pp.

 
 \end{thebibliography}

\end{document}